\theoremstyle{plain}
\newtheorem{thm}{Theorem}[section]
\newtheorem{prop}[thm]{Proposition}
\newtheorem{lem}[thm]{Lemma}
\newtheorem{cor}[thm]{Corollary}
\newtheorem*{claim}{Claim}
\theoremstyle{definition}
\theoremstyle{remark}
\DeclareMathOperator{\vol}{vol}
\DeclareMathOperator{\Fut}{Fut}
\DeclareMathOperator{\linspan}{span}
\begin{document}
\title{K-stability, Futaki invariants and cscK metrics on orbifold resolutions}
\date{\today}

\author{Claudio Arezzo\footnote{ICTP Trieste and Università di Parma, arezzo@ictp.it}, Alberto Della Vedova\footnote{Università di Milano-Bicocca, alberto.dellavedova@unimib.it}, Lorenzo Mazzieri\footnote{Università di Trento, lorenzo.mazzieri@unitn.it}}

\maketitle

\begin{abstract}
In this paper we compute the Futaki invariant of adiabatic K\"ahler classes on resolutions of K\"ahler orbifolds with isolated singularities.
Combined with previous existence results of extremal metrics by Arezzo-Lena-Mazzieri, this gives a number of new existence and non-existence results for cscK metrics.  
\end{abstract}

\section{Introduction}

In this paper we address the question of existence (and non-existence) of  constant scalar curvature K\"ahler metrics
(cscK from now on) in adiabatic K\"ahler classes on resolutions of compact cscK orbifolds with isolated singularities.

Form a purely conceptual point of view the basic existence result for extremal K\"ahler metrics proved in 
\cite{ArezzoLenaMazzieri2015} can be reinterpreted in the following form in the spirit of Szekelyhidi's work on blow ups of smooth points \cite{Gabor1,Gabor2}:

\begin{thm} 
Let $M$ be a K\"ahler orbifold of dimension $m$ with finite singular set $S \subset M$, and let $\pi : M' \to M$ be a resolution of singularities with local model $\pi_p : X_p \to \mathbf C^m/\Gamma_p$ at each $p \in S$.
Assume that $M$ admits a K\"ahler metric $\omega$ with constant scalar curvature and that each $X_p$ admits a scalar flat ALE K\"ahler metric $\eta_p$. 

 Then there exists $\varepsilon _0 >0$ such that for all $ \varepsilon < \varepsilon _0$, the following are equivalent
\begin{enumerate}
\item
$M'$ has a Kcsc metric in the class $\pi^{*}[\omega]+ \sum_{p\in S}\varepsilon [ \eta_p ]$
\item
$(M',   \pi^{*}[\omega ]+ \sum_{p\in S}\varepsilon [ \eta_p] )$ is $K$-stable.
\end{enumerate}
\end{thm}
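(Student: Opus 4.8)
The plan is to read the theorem as a dictionary between the gluing construction of \cite{ArezzoLenaMazzieri2015} and an algebro-geometric stability condition, in the spirit of Sz\'ekelyhidi's analysis of blow-ups of smooth points \cite{Gabor1,Gabor2}. Throughout write $L_\varepsilon:=\pi^{*}[\omega]+\sum_{p\in S}\varepsilon[\eta_p]$. The implication $(1)\Rightarrow(2)$ is the soft one: a polarised manifold (or orbifold) carrying a cscK metric is K-polystable, by the work of Donaldson, Stoppa and Berman--Darvas--Lu (ultimately resting on the Chen--Cheng estimates), so the only point is that strict K-stability should hold, i.e.\ that there is no non-trivial product test configuration with vanishing Donaldson--Futaki invariant. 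Here one uses that forming the resolution destroys every continuous automorphism of $(M,[\omega])$ that does not simultaneously fix each $p\in S$ and preserve the chosen scalar-flat data; the Futaki-invariant computation of the following sections shows that for $\varepsilon<\varepsilon_0$ the residual Lie algebra $\mathfrak{aut}(M',L_\varepsilon)$ contributes no destabilising direction (and, generically, is trivial), so that $(2)$ follows.

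For $(2)\Rightarrow(1)$, which is the substantive implication, I would argue as follows. By \cite{ArezzoLenaMazzieri2015}, $M'$ carries an \emph{extremal} K\"ahler metric in the class $L_\varepsilon$ for all $\varepsilon<\varepsilon_0$, provided a finite-dimensional non-degeneracy condition on the gluing data holds (automatic when there are no continuous automorphisms, and otherwise a balancing condition of Arezzo--Pacard type). Such a metric is cscK exactly when the classical Futaki invariant of $(M',L_\varepsilon)$ vanishes. Thus the statement reduces to two facts, both read off from the asymptotics of Donaldson--Futaki invariants: (a) the Futaki invariant of the adiabatic class vanishes — which should follow because $\Fut_M\equiv 0$ since $\omega$ is cscK, while the local corrections are governed by the scalar-flat ALE metrics $\eta_p$ and vanish to leading order in $\varepsilon$; and (b) the non-degeneracy/balancing condition of \cite{ArezzoLenaMazzieri2015} is equivalent to the positivity of $\operatorname{DF}(M',L_\varepsilon,\mathcal X)$ over the relevant families of test configurations. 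I would organise (b) around two model classes of degenerations: first, test configurations pulled back from test configurations of $(M,[\omega])$, for which $\operatorname{DF}(M',L_\varepsilon,\cdot)\to\operatorname{DF}(M,[\omega],\cdot)$ as $\varepsilon\to0$, the limit being $\ge 0$ with equality only on products because $\omega$ is cscK; second, degenerations supported near $S$, obtained by inserting into the local models $X_p$ either the deformation to the normal cone of the exceptional set (Ross--Thomas/Stoppa style) or a $\mathbf C^{*}$-action on $X_p$, for which $\operatorname{DF}(M',L_\varepsilon,\cdot)$ is, to leading order in $\varepsilon$, a positive multiple of a local Futaki-type invariant of $(X_p,\eta_p)$ that is controlled by the scalar-flat ALE condition. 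Matching the resulting finite-dimensional inequalities with the balancing condition closes the loop.

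The main obstacle, as always in this circle of ideas, is \emph{not} the computation of $\operatorname{DF}$ on these model families but the verification that for $\varepsilon<\varepsilon_0$ \emph{no other} test configuration destabilises $(M',L_\varepsilon)$: one would like to show that a hypothetical destabilising sequence, after normalisation and a compactness/semicontinuity argument, degenerates either to a destabiliser of $(M,[\omega])$ or to a destabilising $\mathbf C^{*}$-degeneration of some $X_p$, both excluded by the existence of $\omega$ and of $\eta_p$. I expect it is cleaner, however, to sidestep this entirely: in $(1)\Rightarrow(2)$ use only cscK $\Rightarrow$ K-polystable together with the automorphism count above, and in $(2)\Rightarrow(1)$ use only the contrapositive — if the balancing/non-degeneracy condition of \cite{ArezzoLenaMazzieri2015} fails, then one of the two model test configurations above already has non-positive Donaldson--Futaki invariant, so $(M',L_\varepsilon)$ is not K-stable. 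This reduces the theorem to the ALM existence result plus the explicit $\varepsilon$-asymptotics of $\operatorname{DF}$ on the pullback and near-$S$ families, which is what the remainder of the paper carries out.
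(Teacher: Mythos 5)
There is a genuine gap, and it comes from misreading both of the two ingredients the paper actually uses. The paper's proof is two lines: $(1)\Rightarrow(2)$ is exactly \cite[Proposition 38]{Gabor2}, and $(2)\Rightarrow(1)$ goes as follows: by \cite[Theorem 1.1]{ArezzoLenaMazzieri2015} the class $\pi^*[\omega]+\varepsilon\sum_p[\eta_p]$ contains an \emph{extremal} metric for all small $\varepsilon$ \emph{unconditionally} (no balancing or non-degeneracy hypothesis is needed for extremal existence when $\omega$ is cscK, hence extremal); K-stability of $(M',L_\varepsilon)$ rules out destabilising product test configurations, i.e.\ it forces the classical Futaki invariant of the class to vanish; and an extremal metric with vanishing Futaki invariant is cscK by Calabi. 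Your proposal misses this chain at two points. First, you assert in (a) that the Futaki invariant of the adiabatic class vanishes because "the local corrections ... vanish to leading order in $\varepsilon$". This is false in general: Theorem \ref{thm::mainthmFutaki} shows the corrections are $\varepsilon^{m-1}\sum_p(u(p)-\underline u)\int_{X_p}\rho_p\wedge\xi_p^{m-1}/(m-1)!$ plus an $\varepsilon^m$ term involving $\underline s(u(p)-\underline u)+\Delta u(p)$, and Theorem \ref{nonex} exploits precisely their non-vanishing to get non-existence. The vanishing of the Futaki invariant is not a geometric fact to be proved; it is the content of hypothesis $(2)$, which your argument for $(2)\Rightarrow(1)$ never actually invokes in this role. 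Second, you condition the ALM existence result on a "balancing/non-degeneracy" hypothesis and then try to identify that hypothesis with positivity of Donaldson--Futaki invariants on model families of test configurations; this is both unnecessary (the extremal statement quoted as Theorem \ref{thmAreLenMazextremals} has no such hypothesis) and left entirely unproved, and it drags in the hard problem of controlling \emph{all} test configurations, which the correct argument for $(2)\Rightarrow(1)$ never needs.

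For $(1)\Rightarrow(2)$ your route is also incomplete as written: cscK $\Rightarrow$ K-polystability (Donaldson, Stoppa, Berman--Darvas--Lu) does not give K-\emph{stability}, and the claim that the resolution "destroys every continuous automorphism" or that the residual automorphisms "contribute no destabilising direction" is not substantiated and is not in general a one-line consequence of the Futaki computation later in the paper. The paper disposes of this direction by citing Sz\'ekelyhidi's \cite[Proposition 38]{Gabor2}, whose adiabatic argument is exactly designed to produce K-stability of the resolved class from the existence of the cscK metric; if you do not want to quote it, you would have to reproduce an argument of that type, not merely K-polystability plus an automorphism count.
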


$1 \implies 2$ is proved in \cite[Proposition 38]{Gabor2}, while $2\implies 1$  is a simple consequence of \cite[Theorem 1.1]{ArezzoLenaMazzieri2015}.

While this result settles the celebrated Tian-Yau-Donaldson Conjecture \cite{DT, Do} for these type of manifolds and classes, because of the known difficulty in checking $K$-stability for a polarized manifold, it remains of great interest to have some effective method to give some geometric conditions on $S$ which guarantee the existence of cscK metrics. This is the primary aim of this paper.

In \cite{ADVLM} and \cite{ArezzoLenaMazzieri2015} partial results have been obtained in this direction by performing
a careful analysis of the PDE, very much in the spirit of the analogue results of Arezzo-Pacard \cite{AP, AP2} for blow ups of smooth points. This approach produces a variety of sufficient conditions for the existence of a cscK metric, all of which follow from the more general results of the present work.

The main result of this paper, Theorem \ref{thm::mainthmFutaki}, is the computation of the Futaki invariant of adiabatic K\"ahler classes on resolution of singularities in terms of corresponding objects on the base orbifold and the geometry of $X_p$. In fact, the nonuniqueness of the resolution one decides to consider, prevents from using the algebraic techniques already employed in the analogue situation for the blow ups of smooth points
by many authors (Stoppa \cite{St}, \cite{DVZ}, Odaka \cite{Od} and Szekelyhidi \cite{Gabor1}).

What the PDE analysis showed is that a critical difference in the behaviour of this problem comes from the ADM mass of the local model of the resolution. While it has been longly known how to relate this number to the behaviour at infinity of ALE metrics, only very recently Hein-LeBrun \cite{HL} have discovered some very elegant interpretation of this quantity in purely cohomological terms for scalar flat metrics. Objects coming into the computation of the Futaki invariant are different from theirs, yet their work has been a key source of inspiration to bypass the problem of non-uniqueness of the resolution.

A number of consequences follows from our Theorem \ref{thm::mainthmFutaki} both getting a new proof of the results of \cite{ADVLM} and \cite{ArezzoLenaMazzieri2015}, but more importantly of new existence and various nonexistence results, which are discussed in details in Section~\ref{sec:csck} below. Using this approach, we can distinguish three different situations (it is worth recalling for the convenience of the reader that an ALE manifold is allowed to have zero ADM mass without being isometric to the flat Euclidean space, as pointed out in \cite{LeBrun1988}):

\begin{enumerate}
\item
For all $p\in S$, the local model $X_p$ has a scalar flat metric with zero ADM mass;
\item
There exist $p,q\in S$ such that the local model $X_p$ has a scalar flat metric with zero ADM mass, 
the local model $X_q$ has a scalar flat metric with non-zero ADM mass, and the adiabatic classes have the same scales of volumes of exceptional divisors;
\item
There exist $p,q\in S$ such that local model $X_p$ has a scalar flat metric with zero ADM mass, 
the local model $X_q$ has a scalar flat metric with non-zero ADM mass, and the adiabatic classes have different scales of volumes of exceptional divisors.
\end{enumerate} 

In each of these cases we give a sufficient condition which generalizes the ones found in
\cite{ADVLM} and \cite{ArezzoLenaMazzieri2015} in terms of the position of the singular points to be resolved, which we prove to be essentially also 
necessary in Theorem \ref{nonex}. This is done in Theorems  \ref{somezero}, \ref{allzero} and \ref{diffvolumes} respectively.

\section{The Futaki invariant of an orbifold resolution}

Let $(M,\omega)$ be a compact K\"ahler orbifold of complex dimension $m$ with finite singular set $S \subset M$.
This means that $M$ is a compact Hausdorff topological space endowed with a structure of $n$-dimensional complex manifold on the subset $M\setminus S$ such that for each singular point $p \in S$ there exist the following data:
\begin{itemize}
\item a neighborhood $U_p$ intersecting $S$ just at $p$,
\item a non-trivial finite subgroup $\Gamma_p \subset U(m)$,
\item a homeomorphism between $U_p$ and a ball $B(r)/\Gamma_p \subset \mathbf C^m/\Gamma_p$ which restricts to a biholomorphism between $U_p \setminus\{p\}$ and the punctured ball $B'(r)/\Gamma_p$
\end{itemize}
Moreover, $\omega$ restricts to a K\"ahler form of a genuine K\"ahler metric on $M \setminus S$, and the restriction of $\omega$ to $U_p$ lifts to a $\Gamma_p$-invariant K\"ahler form on the ball $B(r) \subset \mathbf C^m$.

The quotient $\mathbf C^m/\Gamma_p$ is called the local model for the singularity at $p$.
We stress that different singular points may have different local models. 
Finally note that, in principle, the radius $r$ may depend on $p$, but taking the minimum as $p$ varies on $S$ we can suppose that $r$ is indeed independent of the point.

\subsection{Resolution of orbifold singularities}\label{concentratedresolutions}

Let $p \in S$ be a singular point of $M$, and let 
\begin{equation*}
\pi_p : X_p \to \mathbf C^m/\Gamma_p
\end{equation*}
be a resolution of singularities of the local model at $p$.
In view of our applications, we will always assume that $X_p$ admits a K\"ahler metric.
By definition, $\pi_p$ is a proper birational morphism from a $m$-dimensional complex manifold $X_p$ to $\mathbf C^m/\Gamma_p$ which restricts to a biholomorphism on the complement of $\pi_p^{-1}(0)$.
It follows by definition that $\pi_p^{-1}(0)$ is a union of compact complex submanifolds of $X$.
The biholomorphism between $U_p \setminus \{p\}$ and the punctured ball $B'(r)/\Gamma_p$ existing by definition of complex orbifold, and the fact that $\pi_p$ is a biholomorphism on a neighborhood of $\partial B(r)/ \Gamma_p$ also allow to replace each neighborhood $U_p \subset M$ with the resolved ball $\pi_p^{-1} (B(r)/\Gamma_p)$ and obtain a complex manifold $M'$ and a resolution of singularities
\begin{equation*}
\pi : M' \to M.
\end{equation*}
This map collects all maps $\pi_p$ as $p$ varies in $S$.
More precisely, $\pi$ acts on $\pi_p^{-1}(B(r)/\Gamma_p)$ as the composition of $\pi_p$ together with the homeomorphism from $B(r)/\Gamma_p$ to $U_p$.  
Moreover $\pi$ is the identity on the complement of $\pi^{-1}(U)$, where $U$ is the union of all $U_p$ as $p$ varies in $S$.
In particular $\pi$ turns out to be a biholomorphism when restricted to the complement of $\pi^{-1}(S)$.

\subsection{A K\"ahler metric on $M'$}

In this subsection, we construct a K\"ahler metric on $M'$ which is, in some respect, a deformation of the K\"ahler metric $\omega$ on $M$. 

For any $p \in S$, let $\eta_p$ be a K\"ahler metric on the model resolution $X_p$ of the form
\begin{equation*}
\eta_p = \xi_p + dd^c \varphi_p,
\end{equation*}
where $\xi_p$ is a $(1,1)$-form supported in $\pi_p^{-1}(B(r)/\Gamma_p)$, and $\varphi_p$ is a smooth function.
Since we constructed $M'$ by replacing each singular ball $U_p$ with the resolved ball $\pi_p^{-1}(B(r)/\Gamma_p)$, we can think of each $\xi_p$ as a $(1,1)$-form on $M'$.
Thus, for all real $\varepsilon$, we can consider the following $(1,1)$-form on $M'$
\begin{equation*}
\omega_\varepsilon = \pi^* \omega + \varepsilon \sum_{p\in S} \xi_p.
\end{equation*}
Note that $\omega_\varepsilon$ defines a K\"ahler metric on $M'$ for all $\varepsilon>0$ sufficiently small.
This can be seen by considering the restriction of $\omega$ on $\pi^{-1}(U)$ and on $M' \setminus \pi^{-1}(U)$.
The latter is positive since there the map $\pi$ restricts to a biholomorphism and $\xi$ vanishes.
Thus it remains to check that for $\varepsilon$ sufficiently small $\omega_\varepsilon$ is positive around the resolution $\pi^{-1}(U_p)$ of any singular point $p \in S$.
Over $\pi^{-1}(U_p)$ the form $\omega_\varepsilon$ restricts to $\pi_p^*\omega + \varepsilon \xi_p$ (here, for ease of notation, we wrote $\omega$ instead of the pullback of $\omega|_{U_p}$ to the ball $B(r)/\Gamma_p$).
Since $\omega$ comes from a $\Gamma_p$-invariant form on $B(r)$, we can suppose $\omega = dd^c h_p$ for some smooth function $h_p$ on the ball $B(r)/\Gamma_p$.
As a consequence, on $\pi^{-1}(U_p)$ we have
\begin{equation*}
\omega_\varepsilon = dd^c (\pi_p^* h_p - \varepsilon \varphi_p) + \varepsilon \eta_p.
\end{equation*}
Note that $\eta_p$ is positive in any neighborhood of the exceptional set $\pi_p^{-1}(p)$.
On the other hand, for $\varepsilon$ sufficiently small the function $\pi_p^* h_p - \varepsilon \varphi_p$ is plurisubharmonic on the complement of the exceptional set.
This shows that $\omega_\varepsilon$ is positive on any $\pi^{-1}(U_p)$ provided $\varepsilon$ is sufficiently small, as claimed.

\subsection{Pushing down vector fields}

In this section we show that any holomorphic vector field on $M'$ induces a holomorphic vector field on $M$.
If the vector field on $M'$ is also Hamiltonian, the induced vector field on $M$ is Hamiltonian too. 

\begin{lem}\label{holomorphicpushdown}
Any holomorphic vector field $V$ on $M'$ descends to a holomorphic vector field $\pi_*V$ on $M$ which vanishes at all points of $S$.
\end{lem}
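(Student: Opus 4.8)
The plan is to transport $V$ across the open set where $\pi$ is a biholomorphism, to extend the resulting field holomorphically across the singular set by a Hartogs-type argument, and then to read off the vanishing along $S$ from the invariance under the local groups $\Gamma_p$. To begin, since $\pi$ restricts to a biholomorphism of $M' \setminus \pi^{-1}(S)$ onto $M \setminus S$, the pushforward $W := \pi_*\bigl(V|_{M' \setminus \pi^{-1}(S)}\bigr)$ is a well-defined holomorphic vector field on $M \setminus S$, and it remains only to show that $W$ extends to a holomorphic orbifold vector field on $M$ that vanishes along $S$; both are local statements near a fixed $p \in S$.

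Near such a $p$, identify $U_p \setminus \{p\}$ with $B'(r)/\Gamma_p$, where $B'(r) = B(r) \setminus \{0\} \subset \mathbf C^m$ and $\Gamma_p$ acts freely on $B'(r)$; in particular $m \ge 2$, since every quotient $\mathbf C/\Gamma$ is smooth and so an orbifold point can occur only in dimension at least two. Pulling $W|_{U_p \setminus \{p\}}$ back along the \'etale covering $B'(r) \to B'(r)/\Gamma_p$ yields a $\Gamma_p$-invariant holomorphic vector field $\widetilde W = \sum_{j=1}^m f_j\, \partial_{z_j}$ on the punctured ball, with the $f_j$ holomorphic on $B'(r)$. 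By Hartogs' extension theorem — this is exactly where $m \ge 2$ is used — each $f_j$ extends holomorphically to $B(r)$, so $\widetilde W$ extends to a holomorphic vector field on $B(r)$, still $\Gamma_p$-invariant by analytic continuation; it therefore descends to the local model and glues with $W$. Carrying this out at every $p \in S$ produces the field $\pi_* V$ on $M$. For the vanishing at $p$, evaluate the invariance relation $\gamma_* \widetilde W = \widetilde W$ at the fixed point $0$: since $\gamma$ is linear, $d\gamma_0 = \gamma$, so $\gamma\bigl(\widetilde W(0)\bigr) = \widetilde W(0)$ for every $\gamma \in \Gamma_p$, i.e.\ $\widetilde W(0)$ lies in the $\Gamma_p$-fixed subspace of $\mathbf C^m$; that subspace is trivial because $\Gamma_p$ is non-trivial and acts freely off the origin, whence $\widetilde W(0) = 0$, i.e.\ $\pi_* V$ vanishes at $p$.

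The routine ingredients are the biholomorphic pushforward and the Hartogs extension; the one point requiring a small idea is the last step, namely that freeness of the $\Gamma_p$-action is precisely what forces the constant term of the lifted field at the origin to vanish. An alternative route to the whole statement, slightly more robust, is to integrate $V$ to a one-parameter group $\{\Psi_t\}$ of biholomorphisms of the compact manifold $M'$ and to show that each $\Psi_t$ preserves the exceptional fibre $E_p = \pi^{-1}(p)$: the image $\pi(\Psi_t(E_p))$ is a single point, because $\Gamma_p$-invariant polynomials on $\mathbf C^m$ are constant on the compact connected set $E_p$ and separate the points of $\mathbf C^m/\Gamma_p$, and this point cannot be a regular point of $M$, since its $\pi$-preimage would then be zero-dimensional while $\dim E_p \ge 1$; hence $\Psi_t$ descends to biholomorphisms of $M$ fixing $S$ pointwise, and differentiating at $t = 0$ recovers $\pi_* V$ together with its vanishing on $S$. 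In both approaches the genuine obstacle is the same: controlling the field (respectively the flow) across the contracted fibres, which is where one appeals to Hartogs.
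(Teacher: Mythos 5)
Your argument is correct, and its first half (pushing $V$ down over $M'\setminus\pi^{-1}(S)$, lifting to the punctured ball, extending by Hartogs, and descending by $\Gamma_p$-invariance) is exactly the paper's argument. Where you genuinely diverge is the vanishing at $p\in S$: the paper observes that $\pi^{-1}(p)$ is a union of compact complex submanifolds, so $V$ is tangent to it, hence $\pi_*V$ tends to zero as one approaches $p$ and vanishes there by continuity; you instead evaluate the invariance relation at the origin of the uniformizing chart, getting $\gamma\bigl(\widetilde W(0)\bigr)=\widetilde W(0)$ for all $\gamma\in\Gamma_p$ and concluding $\widetilde W(0)=0$ because the $\Gamma_p$-fixed subspace is trivial when $\Gamma_p$ is non-trivial and acts freely off $0$ (a standing assumption in this paper, made explicit in Section~\ref{sec:ALEres}). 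Your route has the merit of showing that the vanishing is forced by the orbifold structure alone -- any holomorphic orbifold vector field with non-trivial isolated isotropy vanishes at the singular point -- independently of the resolution, whereas the paper's tangency argument does not use freeness of the action and instead exploits the positive-dimensional compact exceptional fibre; both are sound. Your side remark on $m\ge 2$ for Hartogs is a fair point the paper leaves implicit, and the alternative flow-based argument you sketch (showing $\Psi_t$ preserves $E_p$ via invariant polynomials and the maximum principle) is plausible but only outlined; since it is presented as an aside and not needed, this does not affect the proof.
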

\begin{proof}
Since $\pi$ is a biholomorphism on the complement of $\pi^{-1}(S)$, pushing down the restriction of $V$ to that set defines a vector field $V'$ on $M \setminus S$.
Given $p \in S$, the restriction of $V'$ to $U_p \setminus\{p\}$ lifts to a $\Gamma_p$-invariant vector field on the punctured ball $B'(r)$ of $\mathbf C^m$. 
By Hartog's theorem such a vector field extends to a holomorphic vector field on the whole ball $B(r)$.
Of course such a vector field is $\Gamma_p$-invariant, and so it gives a holomorphic vector field on $U_p$ which is equal to $V'$ on $U_p \setminus\{p\}$. 
Therefore one ends up with a holomorphic vector field $\pi_*V$ on $M$.
 
It remains to show that $\pi_*V$ vanishes at any $p \in S$.
To this end, note that the fiber $\pi^{-1}(p)$ is a union of compact complex submanifolds.
Therefore $V$ must be tangent to it, and consequently $\pi_*V$ must tend to zero as approaching to $p$.
By continuity we can then conclude that $\pi_*V$ actually vanishes at $p$.
\end{proof}

\begin{lem}\label{hamiltonianpushdown}
If $V$ is a holomorphic vector field on $M'$ and is Hamiltonian with respect to $\omega_\varepsilon$, then $\pi_*V$ is Hamiltonian with respect to $\omega$ on $M$.
Moreover, if $u_\varepsilon$ and $u$ are Hamiltonian potentials for $V$ and $\pi_*V$ respectively, then one has
\begin{equation}\label{expansionpotential}
u_\varepsilon = \pi^*u + \varepsilon \sum_{p \in S} u_p + c(\varepsilon),
\end{equation}
where $u_p$ is a smooth function supported in $\pi^{-1}(U_p)$ satisfying $ d u_p = i_V \xi_p $, and $c(\varepsilon)$ is a constant.
\end{lem}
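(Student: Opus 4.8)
The plan is to contract the decomposition $\omega_\varepsilon = \pi^*\omega + \varepsilon\sum_{p\in S}\xi_p$ with $V$, to show that the two resulting types of pieces are separately exact, and then to recognize the piece coming from $\pi^*\omega$ as the pullback of a Hamiltonian potential on $M$.

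Contracting with $V$ gives $du_\varepsilon = i_V\omega_\varepsilon = i_V\pi^*\omega + \varepsilon\sum_{p\in S} i_V\xi_p$, where each $i_V\xi_p$ is supported in a compact subset of $\pi^{-1}(U_p)$ and these supports are pairwise disjoint. The crucial point is that each $i_V\xi_p$ is \emph{closed}, equivalently that $\mathcal L_V\xi_p = 0$: from $\mathcal L_V\omega_\varepsilon = d\, i_V\omega_\varepsilon = 0$ one obtains only $\mathcal L_V\pi^*\omega + \varepsilon\sum_{p}\mathcal L_V\xi_p = 0$, but since the $\mathcal L_V\xi_p$ have pairwise disjoint supports, subtracting this relation for two values of $\varepsilon$ for which $V$ is Hamiltonian — or, equivalently, choosing from the start the $\eta_p$, hence the $\xi_p$, invariant under the torus generated by the flow of $V$ (which one may by averaging) — forces each $\mathcal L_V\xi_p$, and then $\mathcal L_V\pi^*\omega$, to vanish. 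Granting this, $i_V\xi_p$ is a closed $1$-form compactly supported in $\pi^{-1}(U_p)$; since $\pi^{-1}(U_p)$ is homotopy equivalent to the exceptional set $E_p = \pi^{-1}(p)$, whose real dimension is at most $2m-2$, Poincar\'e duality gives $H^1_c(\pi^{-1}(U_p);\mathbf R) \cong H^{2m-1}(\pi^{-1}(U_p);\mathbf R) \cong H^{2m-1}(E_p;\mathbf R) = 0$, so $i_V\xi_p = du_p$ for a smooth function $u_p$ supported in $\pi^{-1}(U_p)$. This produces the $u_p$ of the statement.

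I would then set $w_\varepsilon := u_\varepsilon - \varepsilon\sum_{p\in S}u_p$, so that $dw_\varepsilon = i_V\pi^*\omega$. By Lemma~\ref{holomorphicpushdown}, $\pi_*V$ is a holomorphic vector field on $M$ vanishing on $S$; near a singular point it lifts to a $\Gamma_p$-invariant holomorphic field on the uniformizing ball (extending across the origin by Hartogs), so $i_{\pi_*V}\omega$ is a smooth $1$-form on $M$. Away from $\pi^{-1}(S)$ one has $i_V\pi^*\omega = \pi^*(i_{\pi_*V}\omega)$, and since both sides are smooth and vanish identically along each fibre of $\pi$ (because $V$ is tangent to the fibres and $\pi_*V$ vanishes at the singular points), this identity extends to all of $M'$. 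Hence $dw_\varepsilon = \pi^*(i_{\pi_*V}\omega)$ restricts to zero on every fibre of $\pi$; as the fibres are connected ($\mathbf C^m/\Gamma_p$ being normal), $w_\varepsilon$ is constant along them and descends to a function $u$ on $M$ with $\pi^*u = w_\varepsilon$.

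Finally, $u$ is smooth and satisfies $du = i_{\pi_*V}\omega$: this is clear on $M\setminus S$, where $\pi$ is a biholomorphism and $d(\pi^*u) = dw_\varepsilon = \pi^*(i_{\pi_*V}\omega)$, while near each $p$ the lift of $u$ to the uniformizing ball is continuous, smooth off the origin, and has differential equal to the smooth closed form obtained by lifting $i_{\pi_*V}\omega$, hence agrees there with a smooth primitive of the latter. Thus $\pi_*V$ is Hamiltonian with respect to $\omega$ with potential $u$, and unwinding the definition of $w_\varepsilon$ gives $u_\varepsilon = \pi^*u + \varepsilon\sum_{p\in S}u_p + c(\varepsilon)$, the constant $c(\varepsilon)$ absorbing the normalization freedom in the choice of the potentials $u_\varepsilon$ and $u$. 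The only step that is not a formal manipulation is the closedness of the forms $i_V\xi_p$ in the second paragraph — establishing $\mathcal L_V\xi_p = 0$ — and I expect that to be the main obstacle; the cohomology vanishing that produces the $u_p$ and the descent along $\pi$ are routine once it is in place.
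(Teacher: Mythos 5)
You have correctly isolated the crux, but the crux is exactly where the argument has a genuine gap: neither of your two suggested routes to $\mathcal L_V\xi_p=0$ works under the stated hypothesis. Route (a) needs $V$ to be Hamiltonian with respect to $\omega_{\varepsilon}$ for \emph{two} distinct values of $\varepsilon$, while the lemma assumes it for a single fixed $\varepsilon$; and you cannot upgrade one value to two, because whether $\mathcal L_V\omega_{\varepsilon'}=0$ for a second value $\varepsilon'$ is governed precisely by the quantities $\mathcal L_V\xi_p$ you are trying to show vanish -- the argument is circular. Route (b) is not "equivalent" to (a) and changes the statement: replacing each $\xi_p$ by its average over the torus closure of the flow of $V$ modifies $\omega_\varepsilon$ itself, so the hypothesis no longer refers to the form you are working with ($V$ need not remain Hamiltonian for the averaged form, since contracting the exact correction $\bar\xi_p-\xi_p$ with $V$ need not produce an exact $1$-form), the conclusion would then concern the averaged forms rather than the given $\xi_p$ which enter Theorem \ref{thm::mainthmFutaki}, and the average need not even be supported in $\pi^{-1}(U_p)$, because the torus fixes the exceptional set but has no reason to preserve that neighborhood. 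So the step you flag as "the main obstacle" is indeed unproved, and with it the existence of the $u_p$ with $du_p=i_V\xi_p$ and the prescribed support.

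For comparison, the paper obtains this step locally rather than globally: on $\pi^{-1}(U_p)$ it writes $\omega=dd^c h_p$ and uses the identity, valid for holomorphic $V$, expressing $i_Vdd^c(\pi_p^*h_p)$ as $d^c V(\pi_p^*h_p)+\tfrac14\, d\, JV(\pi_p^*h_p)$; comparing with $du_\varepsilon=i_V\omega_\varepsilon$ shows that $\varepsilon\, i_V\xi_p$ is exact up to the single term $d^c V(\pi_p^*h_p)$, which is independent of $\xi_p$ and is then argued to vanish, giving directly $i_V\xi_p=du_p$ with $du_p$ supported in the support of $\xi_p$, and simultaneously identifying $u$ with $\tfrac14(J\pi_*V)(h_p)$ near $p$. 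No averaging, no second value of $\varepsilon$, and no compactly supported cohomology is needed. Your remaining steps (the vanishing of $H^1_c(\pi^{-1}(U_p);\mathbf R)$ to produce a compactly supported primitive, and the descent of $w_\varepsilon=u_\varepsilon-\varepsilon\sum_p u_p$ along the connected fibres of $\pi$ to a smooth orbifold potential $u$) are fine, and would indeed complete the proof if you could establish $\mathcal L_V\xi_p=0$ for the given $\varepsilon$ and the given $\xi_p$; as it stands, you have not.
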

\begin{proof}
On the complement of $\pi^{-1}(U)$ the K\"ahler form $\omega_\varepsilon$ is equal to $\omega$ and the vector field $V$ is equal to $\pi_*V$.
Therefore the Hamiltonian potential $u_\varepsilon$ of $V$ restricts to a function on $M' \setminus \pi^{-1}(U)$, say $u$, which does not depend on $\varepsilon$ and is a Hamiltonian potential for $\pi_*V$ with respect to $\omega$ on $M \setminus U$.

Now let $p \in S$.
Identifying $\pi^{-1}(U_p)$ with the resolved ball $\pi_p^{-1} (B(r)/\Gamma_p)$ in the local model $X_p$,
one can write
\begin{equation*}
\omega_\varepsilon = \pi_p^* dd^c h_p + \varepsilon \xi_p,
\end{equation*} 
where $h_p$ is a K\"ahler potential for $\omega$ in the ball $B(r)/\Gamma_p$.
Since, by hypothesis, $V$ is Hamiltonian with potential $u_\varepsilon$, one has 
\begin{equation*}
d u_\varepsilon = i_V dd^c \pi_p^*h_p + \varepsilon i_V \xi_p.
\end{equation*}
On the other hand, $V$ is holomorphic, therefore Cartan formula yields
\begin{equation*}
d u_\varepsilon = d^c V(\pi_p^*h_p) +\frac{1}{4} d JV (\pi_p^*h_p) + \varepsilon i_V \xi_p,
\end{equation*}
whence it follows that
\begin{equation*}
i_V \xi_p = d u_p - \frac{1}{\varepsilon}d^c V(\pi_p^*h_p),
\end{equation*}
for some smooth function $u_p$.
%
%
Note that the last summand does not depend on $\xi_p$, but it is forced to vanish where $\xi_p$ does.
Therefore, by arbitrariness of the metric $\eta_p$ we started with, it must vanish everywhere.
As a consequence the support of $d u_p$ is contained in the support of $\xi_p$.
In particular, up to adding a suitable constant, we can suppose that $u_p$ is supported in $\pi^{-1}(U_p)$.
Thus $u_p$ has all the properties stated above.
Moreover we proved that on $\pi^{-1}(U_p)$ it holds
\begin{equation*}
u_\varepsilon = \frac{1}{4} JV (\pi_p^*h_p) + \varepsilon u_p + c(p,\varepsilon),
\end{equation*}
where $c(p,\varepsilon)$ is a constant.

Since the support of $\xi_p$ is compactly contained in the resolved ball $\pi_p^{-1}(B(r)/\Gamma_p)$, on a neighborhood of the boundary $\pi^{-1}(\partial U_p)$ it holds
\begin{equation*}
u_\varepsilon = \pi_p^* \left( \frac{1}{4} (J\pi_*V) (h_p) + c(p,\varepsilon) \right).
\end{equation*}
Therefore $u$ extends to $\frac{1}{4} (J\pi_*V) (h_p) + c(p,\varepsilon)$ on $U_p$.
Finally note that it holds $du=i_{\pi_*V}\omega$, that is $u$ is a Hamiltonian potential for $\pi_*V$.
Since Hamiltonian potentials are defined just up to an additive constant, equation \eqref{expansionpotential} then follows.
\end{proof}

\subsection{The Futaki invariant}

Given a holomorphic vector field $V$ on the resolution $M'$, and supposing that $V$ is Hamiltonian with respect to $\omega_\varepsilon$ with potential $u_\varepsilon$, one can form the Futaki invariant
\begin{equation*}
\Fut(V,\omega_\varepsilon)
= \int_{M'} (u_\varepsilon - \underline{u_\varepsilon}) \frac{\rho_\varepsilon \wedge \omega_\varepsilon^{m-1}}{(m-1)!},
\end{equation*}
where $\rho_\varepsilon$ is the Ricci form of $\omega_\varepsilon$, and $\underline{u_\varepsilon} = \int u_\varepsilon \omega_\varepsilon^m / \int \omega_\varepsilon^m$ is the mean value of $u_\varepsilon$ with respect to $\omega_\varepsilon$.

On the other hand, thanks to Lemmata \ref{holomorphicpushdown} and \ref{hamiltonianpushdown}, $V$ descends to a holomorphic vector field $\pi_*V$ on $M$ wich is Hamiltonian with respect to $\omega$ with potential, say, $u$.
Thus one can also consider the Futaki invariant
\begin{equation*}
\Fut(\pi_*V,\omega)
= \int_M (u - \underline{u}) \frac{\rho \wedge \omega^{m-1}}{(m-1)!},
\end{equation*}
where $\rho$ is the Ricci form of $\omega$, and $\underline{u} = \int u \,\omega^m / \int \omega^m$.
The Futaki invariants $\Fut(V,\omega_\varepsilon)$ and $\Fut(\pi_*V,\omega)$ are relate by the following
\begin{thm}\label{thm::mainthmFutaki}
As $\varepsilon \to 0$ one has
\begin{multline}\label{expansionfutaki}
\Fut(V,\omega_\varepsilon)
= \Fut(\pi_*V,\omega)
+ \varepsilon^{m-1} \sum_{p \in S} (u(p) - \underline{u}) \int_{X_p} \frac{\rho_p \wedge \xi_p^{m-1}}{(m-1)!} \\
- \varepsilon^m \sum_{p \in S} \left(\underline{s} (u(p) - \underline{u}) + \Delta u(p)\right) \int_{X_p} \frac{\xi_p^m}{m!} + O(\varepsilon^{m+1}).
\end{multline}
where $\rho_p$ is the Ricci form of the chosen ALE K\"ahler metric $\eta_p$ on the model resolution $X_p$, and $\underline{s} = m \int \rho \wedge \omega^{m-1} / \int \omega^m$ is the mean scalar curvature of $\omega$.
\end{thm}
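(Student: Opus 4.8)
The plan is to rewrite the Futaki invariant as $\Fut(V,\omega_\varepsilon)=\int_{M'}u_\varepsilon\,\frac{\rho_\varepsilon\wedge\omega_\varepsilon^{m-1}}{(m-1)!}-\underline{u_\varepsilon}\int_{M'}\frac{\rho_\varepsilon\wedge\omega_\varepsilon^{m-1}}{(m-1)!}$, to note that this expression is invariant under adding a constant to $u_\varepsilon$, and hence by Lemma~\ref{hamiltonianpushdown} to work with the explicit representatives $\omega_\varepsilon=\pi^{*}\omega+\varepsilon\sum_{p}\xi_p$ and $u_\varepsilon=\pi^{*}u+\varepsilon\sum_{p}u_p$. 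It then suffices to expand, up to $O(\varepsilon^{m+1})$, the three ingredients $\int_{M'}u_\varepsilon\,\frac{\rho_\varepsilon\wedge\omega_\varepsilon^{m-1}}{(m-1)!}$, $\int_{M'}\frac{\rho_\varepsilon\wedge\omega_\varepsilon^{m-1}}{(m-1)!}$ and $\underline{u_\varepsilon}=\int_{M'}u_\varepsilon\omega_\varepsilon^{m}/\int_{M'}\omega_\varepsilon^{m}$, and to recombine them. At $\varepsilon=0$ this recombination reproduces $\Fut(\pi_*V,\omega)$, since away from the exceptional set all of $\omega_\varepsilon,u_\varepsilon,\rho_\varepsilon$ limit to the pullbacks of the objects on $M$ and the neighbourhood of the exceptional set has $\omega_\varepsilon$-measure $O(\varepsilon^{m-1})$.

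The last two ingredients involve the cohomological quantities $\int_{M'}\omega_\varepsilon^{m}=[\omega_\varepsilon]^{m}$ and $\int_{M'}\frac{\rho_\varepsilon\wedge\omega_\varepsilon^{m-1}}{(m-1)!}=\frac{2\pi}{(m-1)!}\,c_1(M')\cdot[\omega_\varepsilon]^{m-1}$, which depend only on $[\omega_\varepsilon]=\pi^{*}[\omega]+\varepsilon\sum_p[\xi_p]$. The key point is that every mixed product $(\pi^{*}[\omega])^{j}\cdot[\xi_p]^{k}$ with $j\ge1$, $k\ge1$ --- and, more generally, such a product further multiplied by any closed class --- vanishes: one represents $[\xi_p]^{k}$ by the compactly supported form $\xi_p^{k}$ on $\pi^{-1}(U_p)$, uses that $\pi^{*}\omega=dd^{c}(\pi_p^{*}h_p)$ is $d$-exact there, and integrates by parts, no boundary term occurring because $\xi_p$ has compact support. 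Together with $c_1(M')\cdot(\pi^{*}[\omega])^{m-1}=c_1(M)\cdot[\omega]^{m-1}$ (each exceptional divisor maps to a point, hence pairs trivially with $(\pi^{*}[\omega])^{m-1}$) and $c_1(M')|_{\pi^{-1}(U_p)}=c_1(X_p)$, this yields the \emph{exact} identities $\int_{M'}\omega_\varepsilon^{m}=\int_M\omega^{m}+\varepsilon^{m}\sum_p\int_{X_p}\xi_p^{m}$ and $\int_{M'}\frac{\rho_\varepsilon\wedge\omega_\varepsilon^{m-1}}{(m-1)!}=\int_M\frac{\rho\wedge\omega^{m-1}}{(m-1)!}+\varepsilon^{m-1}\sum_p\int_{X_p}\frac{\rho_p\wedge\xi_p^{m-1}}{(m-1)!}$, whence the needed expansion of $\underline{s_\varepsilon}$; this already accounts for the $\underline{s}(u(p)-\underline{u})$ part of the $\varepsilon^{m}$-coefficient once $\underline{u_\varepsilon}$ is brought in.

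For the remaining term $\int_{M'}u_\varepsilon\,\frac{\rho_\varepsilon\wedge\omega_\varepsilon^{m-1}}{(m-1)!}$ and for $\int_{M'}u_\varepsilon\omega_\varepsilon^{m}$ I would localize to the pieces $\pi^{-1}(U_p)$ and their complement. On $M'\setminus\pi^{-1}(U)$, and on the part of $\pi^{-1}(U_p)$ outside $\operatorname{supp}\xi_p$, all data are pullbacks from $M$ and $\pi$ is a biholomorphism, so the complement contributes, together with the leading part of the $\pi^{-1}(U_p)$-contributions, exactly $\Fut(\pi_*V,\omega)$. On $\pi^{-1}(U_p)$, viewed as a resolved ball in $X_p$, I would substitute $\omega_\varepsilon=\pi_p^{*}\omega+\varepsilon\xi_p=\varepsilon\eta_p+dd^{c}(\pi_p^{*}h_p-\varepsilon\varphi_p)$, $u_\varepsilon=\pi_p^{*}u+\varepsilon u_p$ with $du_p=i_V\xi_p$, and $\rho_\varepsilon=\pi_p^{*}\rho+\beta_{p,\varepsilon}$ with $\beta_{p,\varepsilon}$ supported in $\operatorname{supp}\xi_p$. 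The analytic input I would import from \cite{ArezzoLenaMazzieri2015} is the behaviour of $\rho_\varepsilon$ as $\varepsilon\to0$: away from the exceptional set it converges to $\pi_p^{*}\rho$, while near it, where $\omega_\varepsilon\approx\varepsilon\eta_p$ and the Ricci form is scale invariant, it is asymptotic to $\rho_p$; the transition in the neck, where $\omega_\varepsilon$ is strongly anisotropic, and the size of the resulting integrals are those controlled in the weighted spaces of that paper and produce the error $O(\varepsilon^{m+1})$. Then repeated integration by parts on the resolved ball --- using that $\pi_p^{*}\omega$ and $\pi_p^{*}\rho$ are $d$-exact, that $\xi_p$ is closed, and that $du=i_{\pi_*V}\omega$, $du_p=i_V\xi_p$ --- forces the contributions of orders $\varepsilon^{1},\dots,\varepsilon^{m-2}$ to cancel, exactly as for the cohomological pieces. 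Since the measures $\rho_\varepsilon\wedge\xi_p^{m-1}$ and $\xi_p^{m}$ concentrate at $\pi_p^{-1}(p)$ as $\varepsilon\to0$, the weight $\pi_p^{*}u$ may there be replaced by its Taylor jet at $p$: the value $u(p)$ produces the $\varepsilon^{m-1}$-term $(u(p)-\underline{u})\int_{X_p}\frac{\rho_p\wedge\xi_p^{m-1}}{(m-1)!}$, the $-\underline{u}$ coming from the $\underline{u_\varepsilon}$-normalization (also responsible for the $\underline{s}(u(p)-\underline{u})$ piece above), while the first nonconstant term of this jet --- equivalently the trace $\Delta u(p)$ of $dd^{c}u$ --- together with the pairing of the $\varepsilon u_p$-part of $u_\varepsilon$ against the $\varepsilon^{m-1}\xi_p^{m-1}$-part of $\omega_\varepsilon^{m-1}$, produces the $-\Delta u(p)\int_{X_p}\frac{\xi_p^{m}}{m!}$ piece. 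Collecting everything gives \eqref{expansionfutaki}.

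I expect the main obstacle to be the uniform control, as $\varepsilon\to0$, of $\rho_\varepsilon$ and of the integrals taken against it in the transition region between the base and the ALE bubble --- this is what pins the remainder down to $O(\varepsilon^{m+1})$ and legitimizes replacing weights by their jets --- together with the bookkeeping needed to verify that all intermediate orders $\varepsilon^{1},\dots,\varepsilon^{m-2}$ cancel. Finally I would remark that $\int_{X_p}\rho_p\wedge\xi_p^{m-1}$ and $\int_{X_p}\xi_p^{m}$ are cohomological, namely the pairings of $2\pi c_1(X_p)$, respectively $[\xi_p]$, with $[\xi_p]^{m-1}\in H^{2m-2}_c(X_p)$, and hence depend only on $X_p$ and on $[\eta_p]$, not on the particular $\xi_p$ or $\eta_p$ chosen; this is exactly where the Hein--LeBrun-type cohomological reading of these mass-like quantities makes the formula independent of the resolution data.
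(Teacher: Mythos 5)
Your treatment of the two unweighted quantities is fine and coincides with the paper's: $\int_{M'}\omega_\varepsilon^m=\int_M\omega^m+\varepsilon^m\sum_p\int_{X_p}\xi_p^m$ and $\int_{M'}\rho_\varepsilon\wedge\omega_\varepsilon^{m-1}=\int_M\rho\wedge\omega^{m-1}+\varepsilon^{m-1}\sum_p\int_{X_p}\rho_p\wedge\xi_p^{m-1}$ are exact identities obtained by Stokes' theorem, the second using that $\rho_\varepsilon-\rho_p=dd^c\log(\eta_p^m/\omega_\varepsilon^m)$ on the region carrying $\xi_p$. Note in particular that no asymptotics of $\rho_\varepsilon$ is needed here, which is fortunate, because the analytic input you propose to import is not available: the metric $\omega_\varepsilon=\pi^*\omega+\varepsilon\sum_p\xi_p$ is \emph{not} the glued metric of \cite{ArezzoLenaMazzieri2015} (near the exceptional set it is $dd^c\pi_p^*h_p+\varepsilon\xi_p$, which is not uniformly comparable to $\varepsilon\eta_p$), so the weighted-space control of $\rho_\varepsilon$ in the neck region, and with it your justification of the $O(\varepsilon^{m+1})$ remainder and of the replacement of weights by their jets, does not transfer to this family.

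The genuine gap is in the weighted integrals $\int_{M'}u_\varepsilon\,\rho_\varepsilon\wedge\omega_\varepsilon^{m-1}$ and $\int_{M'}u_\varepsilon\,\omega_\varepsilon^m$, which are the heart of the theorem: since $u_\varepsilon$ is not closed, the "repeated integration by parts" you invoke produces extra terms involving $du$ and $du_p$, and the asserted cancellation of the orders $\varepsilon^1,\dots,\varepsilon^{m-2}$, as well as the precise coefficients $u(p)$ and $-\Delta u(p)$ at orders $\varepsilon^{m-1}$ and $\varepsilon^m$, is exactly the nontrivial bookkeeping that your proposal leaves unproven. The paper resolves this with a specific device you are missing: it packages the Hamiltonian data into the inhomogeneous forms $\omega_\varepsilon+u_\varepsilon$, $\xi_p+u_p$, $\pi^*(\omega+u)$ and $\rho_\varepsilon-\Delta_\varepsilon u_\varepsilon$, all closed for $d_V=d-i_V$, writes $\Fut(V,\omega_\varepsilon)=\int_{M'}(\rho_\varepsilon-\Delta_\varepsilon u_\varepsilon)\wedge\frac{(\omega_\varepsilon+u_\varepsilon)^m}{m!}-\underline{u_\varepsilon}\int_{M'}\rho_\varepsilon\wedge\frac{\omega_\varepsilon^{m-1}}{(m-1)!}$, and then uses a localization Claim (proved via Poincar\'e--Lelong and Stokes for $d_V$-closed forms): any $d_V$-closed $\alpha$ vanishing on $\pi^{-1}(p)$ satisfies $\int_{M'}\alpha\wedge(\xi_p+u_p)=0$. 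This turns every mixed term into an exact evaluation at $p$ (e.g. $\int_{M'}\pi^*(\rho-\Delta u)\wedge\frac{(\xi_p+u_p)^m}{m!}=-\Delta u(p)\int_{X_p}\frac{\xi_p^m}{m!}$), with remainders that are manifestly $O(\varepsilon^{m+1})$ by the expansion of $\omega_\varepsilon+u_\varepsilon$, entirely bypassing any PDE estimate on $\rho_\varepsilon$. Without this (or an equivalent exact mechanism replacing it), your outline does not yet constitute a proof; your closing cohomological remark about $\int_{X_p}\rho_p\wedge\xi_p^{m-1}$ and $\int_{X_p}\xi_p^m$ is correct but does not fill this gap.
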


\begin{proof}
The Futaki invariant of the vector field $V$ can be written as
\begin{equation}\label{FutakiVomegaepsilon}
\Fut(V,\omega_\varepsilon)
= \int_{M'} (\rho_\varepsilon - \Delta_\varepsilon u_\varepsilon) \wedge \frac{(\omega_\varepsilon + u_\varepsilon)^m}{m!}
- \underline{u_\varepsilon} \int_{M'} \rho_\varepsilon \wedge\frac{\omega_\varepsilon^{m-1}}{(m-1)!},
\end{equation}
where $\Delta_\varepsilon$ denotes the Laplacian of the K\"ahler metric on $M'$ associated to $\omega_\varepsilon$.
The first integral and the average of $u_\varepsilon$ perhaps could be calculated using equivariant cohomology theory.
However one can avoid that theory and prove the statement by means of more elementary arguments.
In order to understand the formula above, note that $\rho_\varepsilon - \Delta_\varepsilon u_\varepsilon$ and $\omega_\varepsilon + u_\varepsilon$ are non-homogeneous differential forms on $M'$.
Their wedge product is the usual one, and so the integrand is a sum of even degree differential forms.
The integral of such a form is, by definition, the integral of its $2m$-degree component.
Now consider the differential operator
\begin{equation*}
d_V = d - i_V
\end{equation*}
acting on differential froms on $M'$.
The fact that $u_\varepsilon$ is a Hamiltonian potential for $V$ with respect to $\omega_\varepsilon$ can be stated as
\begin{equation*}
d_V (\omega_\varepsilon + u_\varepsilon) = 0.
\end{equation*}
In other words, $\omega_\varepsilon + u_\varepsilon \in \Omega^*(M)$ is a $d_V$-closed differential form. 
After expanding
\begin{equation}\label{expomegaepsilon+uepsilon}
\omega_\varepsilon + u_\varepsilon = \pi^*(\omega+u) + \varepsilon \sum_{p\in S} \xi_p+u_p,
\end{equation}
one sees that $\pi^*(\omega+u)$ and $\xi_p+u_p$ are $d_V$-closed as well.
Finally, by a standard local calculation one can check that
\begin{equation*}
d_V (\rho_\varepsilon - \Delta_\varepsilon u_\varepsilon) = 0.
\end{equation*}

Now pick a singular point $p \in S$ and let $\pi^{-1}(p)$ be the exceptional divisor over $p$.
The proof of the statement of the Theorem rests essentially on the following
\begin{claim}
Any $d_V$-closed differential form $\alpha \in \Omega^*(M')$ which restricts to the zero form on the exceptional divisor $\pi^{-1}(p)$ satisfies
$\int_{M'} \alpha \wedge (\xi_p + u_p) = 0.$
\end{claim} 

In order to prove the claim note that by Poincaré-Lelong equation one can find a path $F_t$ of smooth functions on $M'$, with $t>0$, such that $\xi_p + dd^c F_t$ weekly converges to a current supported on $\pi^{-1}(p)$ as $t \to 0$.
A moment's thought should show that $\xi_p + u_p + d_Vd^c F_t$ also converges to the same current.
On the other hand, note that by Stokes' Theorem the integral of any $d_V$-closed form on $M'$ vanishes.
As a consequence one has
\begin{equation*}
\int_{M'} \alpha \wedge (\xi_p + u_p) = \int_{M'} \alpha \wedge (\xi_p + u_p + d_V d^c F_t) \to 0
\end{equation*}
as $t \to 0$ thanks to the hypothesis that $\alpha$ restricts to the zero form on $\pi^{-1}(p)$.

\bigskip
Now we can proceed with the proof of the Theorem.
We need to expand \eqref{FutakiVomegaepsilon} in powers of $\varepsilon$.
First of all consider the average $\underline{u_\varepsilon} = \int u_\varepsilon \omega_\varepsilon^m / \int \omega_\varepsilon^m$.
Note that
\begin{equation}\label{integraluvarepsilon=equiv}
(m+1) \int_{M'} u_\varepsilon \omega_\varepsilon^m = \int_{M'} (\omega_\varepsilon + u_\varepsilon)^{m+1}. 
\end{equation}
Substituting \eqref{expomegaepsilon+uepsilon} yields
\begin{multline}\label{equivintegraluvarepsilon}
\int_{M'} (\omega_\varepsilon + u_\varepsilon)^{m+1}
= \int_M (\omega+u)^{m+1} + \sum_{\ell=1}^m \varepsilon^\ell \binom{m+1}{\ell} \sum_{p\in S} \int_{M'} \pi^*(\omega+u)^{m+1-\ell} \wedge (\xi_p + u_p)^\ell \\
+ \varepsilon^{m+1} \sum_{p\in S} \int_{M'} (\xi_p + u_p)^{m+1}.
\end{multline}
Focus on the middle summands of the right hand side.
Note that for all $1 \leq \ell \leq m$ and $p \in S$ the differential form
\begin{equation*}
\alpha = \pi^*(\omega+u)^{m+1-\ell} \wedge (\xi_p + u_p)^{\ell-1} - u(p)^{m+1-\ell} (\xi_p + u_p)^{\ell-1}
\end{equation*}
satisfies the hypotheses of the Claim, whence it follows
\begin{equation*}
\int_{M'} \pi^*(\omega+u)^{m+1-\ell} \wedge (\xi_p + u_p)^\ell = u(p)^{m+1-\ell} \int_{M'} (\xi_p + u_p)^\ell.
\end{equation*}
Since $\ell$ runs from $1$ to $m$, the right hand side integral vanishes unless $\ell=m$, in which case it reduces to the integral of $\xi_p^m$.
Therefore, substituting in \eqref{equivintegraluvarepsilon} and \eqref{integraluvarepsilon=equiv} we get the expansion
\begin{equation}\label{asymexpintuvarepsilon}
\int_{M'} u_\varepsilon \omega_\varepsilon^m
= \int_M u \, \omega^m + \varepsilon^m \sum_{p\in S} u(p) \int_{M'} \xi_p^m
+ O(\varepsilon^{m+1}).
\end{equation}

In order to get the expansion of $\underline {u_\varepsilon}$ we need to divide the expression above by the total volume of $\omega_\varepsilon$.
Recalling that $\omega_\varepsilon = \pi^*\omega + \varepsilon \sum \xi_p$, one has
\begin{equation*}
\int_{M'} \omega_\varepsilon^m
= \int_M \omega^m + \sum_{\ell=1}^m \varepsilon^\ell \binom{m}{\ell} \sum_{p\in S} \int_{M'} \pi^*\omega^{m-\ell} \wedge \xi_p^\ell.
\end{equation*}
Since $\xi_p$ is supported on $\pi^{-1}(U_p)$ and $\pi^*\omega$ is exact on there, Stokes' Theorem yields\begin{equation}\label{asymexpvolume}
\int_{M'} \omega_\varepsilon^m
= \int_M \omega^m + \varepsilon^m \sum_{p\in S} \int_{X_p} \xi_p^m.
\end{equation}
Note that we replaced the integral of $\xi_p$ over $M'$ with the integral over the model resolution $X_p$.
This is possible since the support of $\xi_p$ is contained in $\pi^{-1}(U_p)$, which in turn we have identified with a neighborhood of the exceptional divisor of $X_p$.
Dividing \eqref{asymexpintuvarepsilon} by \eqref{asymexpvolume} finally gives
\begin{equation}\label{finalexpansionuvarepsilon}
\underline{u_\varepsilon}
= \underline{u} + \varepsilon^m \sum_{p\in S} (u(p) - \underline{u}) \frac{\int_{X_p} \xi_p^m}{\int_M \omega^m}
+ O(\varepsilon^{m+1}).
\end{equation}

Now we pass to consider the total scalar curvature of $\omega_\varepsilon$.
Arguing as above, after susbstituting $\omega_\varepsilon = \pi^*\omega + \varepsilon \sum \xi_p$ and applying Stokes' Theorem, one gets
\begin{equation}\label{totalscalarcurvaturesplit}
\int_{M'} \rho_\varepsilon \wedge \omega_\varepsilon^{m-1}
= \int_{M'} \rho_\varepsilon \wedge \pi^*\omega^{m-1}
+ \varepsilon^{m-1} \sum_{p \in S} \int_{M'} \rho_\varepsilon \wedge \xi_p^{m-1}.
\end{equation}
Consider the two summands separately.
Adding and subctracting $\pi^* \rho$ in the first summand yields
\begin{equation*}
\int_{M'} \rho_\varepsilon \wedge \pi^*\omega^{m-1}
= \int_{M'} \pi^*\rho \wedge \pi^*\omega^{m-1} + \int_{M'} (\rho_\varepsilon - \pi^* \rho) \wedge \pi^*\omega^{m-1}.
\end{equation*}
The first summand reduces to the integral of $\rho \wedge \omega^{m-1}$ over $M$, and the second summand vanishes once again by Stokes' Theorem.
Indeed $\omega_\varepsilon = \omega$ on the complement of $U$.
As a consequence $\rho_\varepsilon - \pi^*\rho$ vanishes on that set.
On the other hand $\pi^* \omega$ is exact on any connected component of $U$.
Therefore
\begin{equation}\label{firstsummandtotalscalarcurvature}
\int_{M'} \rho_\varepsilon \wedge \pi^*\omega^{m-1}
= \int_M \rho \wedge \omega^{m-1}.
\end{equation}
Now consider the second summand of the right hand side of \eqref{totalscalarcurvaturesplit}.
Since the support of $\xi_p$ is contained in $\pi^{-1}(U_p)$, which we identified with a neighborhood of the exceptional divisor of the model resolution $X_p$, we can consider the Ricci form $\rho_p$ of the chosen ALE K\"ahler metric $\eta_p$ on $X_p$ and thought of $\rho_p \wedge \xi_p^{m-1}$ as a differential form on $M'$.
After noting that $\rho_\varepsilon-\rho_p = dd^c \log(\eta_p^m/\omega_\varepsilon^m)$, Stokes' Theorem yields
\begin{equation*}
\int_{M'} (\rho_\varepsilon-\rho_p) \wedge \xi_p^{m-1} = 0.
\end{equation*}
On the other hand, we can also consider $\rho_p \wedge \xi_p^{m-1}$ as a differential form on the local resolution $X_p$.
As a consequence we can rewrite equation above in the form
\begin{equation}\label{intrhovarepsilonxipn-1}
\int_{M'} \rho_\varepsilon \wedge \xi_p^{m-1}
= \int_{X_p} \rho_p \wedge \xi_p^{m-1}.
\end{equation}
Therefore substituting \eqref{intrhovarepsilonxipn-1} together with \eqref{firstsummandtotalscalarcurvature} into \eqref{totalscalarcurvaturesplit} yields
\begin{equation}\label{totalscalarcurvatureomegavarepsilon}
\int_{M'} \rho_\varepsilon \wedge \omega_\varepsilon^{m-1}
= \int_M \rho \wedge \omega^{m-1} + \varepsilon^{m-1} \sum_{p\in S} \int_{X_p} \rho_p \wedge \xi_p^{m-1}.
\end{equation}
Thanks to \eqref{finalexpansionuvarepsilon} and \eqref{totalscalarcurvatureomegavarepsilon},
the second summand of the right hand side of \eqref{FutakiVomegaepsilon} expands as
\begin{multline}\label{FutakiVomegaepsilonsecondsummand}
\underline{u_\varepsilon} \int_{M'} \rho_\varepsilon \wedge\frac{\omega_\varepsilon^{m-1}}{(m-1)!}
= \underline{u} \int_M \rho \wedge\frac{\omega^{m-1}}{(m-1)!}
+ \varepsilon^{m-1} \sum_{p\in S} \underline{u} \int_{X_p} \frac{\rho_p \wedge \xi_p^{m-1}}{(m-1)!} \\
+ \varepsilon^m \sum_{p\in S} \underline{s} (u(p) - \underline{u}) \int_{X_p} \frac{\xi_p^m}{m!}
+ O(\varepsilon^{m+1}).
\end{multline}

Finally it remains to consider the first summand of \eqref{FutakiVomegaepsilon}.
Substituting \eqref{expomegaepsilon+uepsilon} yields
\begin{multline}\label{futakifirstexpansion}
\int_{M'} (\rho_\varepsilon - \Delta_\varepsilon u_\varepsilon) \wedge \frac{(\omega_\varepsilon + u_\varepsilon)^m}{m!}
= \int_{M'} (\rho_\varepsilon - \Delta_\varepsilon u_\varepsilon) \wedge \frac{\pi^*(\omega + u)^m}{m!} \\
+ \sum_{\ell=1}^m \varepsilon^\ell \sum_{p \in S} \int_{M'} (\rho_\varepsilon - \Delta_\varepsilon u_\varepsilon) \wedge \frac{\pi^*(\omega + u)^{m-\ell}}{(m-\ell)!} \wedge \frac{\left(\xi_p + u_p\right)^\ell}{\ell!}.
\end{multline}
Focus on the summands of the second line.
Fix $p \in S$ and suppose $0 < \ell <m$, so that the differential form
\begin{equation*}
\alpha = (\rho_\varepsilon - \Delta_\varepsilon u_\varepsilon) \wedge \frac{\pi^*(\omega + u)^{m-\ell} - u(p)^{m-\ell}}{(m-\ell)!} \wedge \frac{\left(\xi_p + u_p\right)^{\ell-1}}{\ell!}
\end{equation*}
satisfies the hypotheses of the Claim above, whence it follows that
\begin{equation*}
\int_{M'} (\rho_\varepsilon - \Delta_\varepsilon u_\varepsilon) \wedge \frac{\pi^*(\omega + u)^{m-\ell}}{(m-\ell)!} \wedge \frac{\left(\xi_p + u_p\right)^\ell}{\ell!}
= \frac{u(p)^{m-\ell}}{(n-\ell)!} \int_{M'} (\rho_\varepsilon - \Delta_\varepsilon u_\varepsilon) \wedge \frac{\left(\xi_p + u_p\right)^\ell}{\ell!}.
\end{equation*}
If $\ell<m-1$, the integrand differential form of the right hand side has no component of degree $2m$, therefore the integral is zero.
On the other hand, for $\ell = m-1$, equation above together with \eqref{intrhovarepsilonxipn-1} give
\begin{equation*}
\int_{M'} (\rho_\varepsilon - \Delta_\varepsilon u_\varepsilon) \wedge \pi^*(\omega + u) \wedge \frac{\left(\xi_p + u_p\right)^{m-1}}{(m-1)!}
= u(p) \int_{X_p} \frac{\rho_p \wedge \xi_p^{m-1}}{(m-1)!}.
\end{equation*}
By discussion above, \eqref{futakifirstexpansion} reduces to
\begin{multline}\label{futakisecondexpansion}
\int_{M'} (\rho_\varepsilon - \Delta_\varepsilon u_\varepsilon) \wedge \frac{(\omega_\varepsilon + u_\varepsilon)^m}{m!}
= \int_{M'} (\rho_\varepsilon - \Delta_\varepsilon u_\varepsilon) \wedge \frac{\pi^*(\omega + u)^m}{m!} \\
+ \varepsilon^{m-1} \sum_{p \in S} u(p) \int_{X_p} \frac{\rho_p \wedge \xi_p^{m-1}}{(m-1)!}
+ \varepsilon^m \sum_{p \in S} \int_{M'} (\rho_\varepsilon - \Delta_\varepsilon u_\varepsilon) \wedge \frac{\left(\xi_p + u_p\right)^m}{m!}.
\end{multline}
In order to treat the first summand of the right hand side, note that the difference $\rho_\varepsilon - \Delta_\varepsilon u_\varepsilon - \pi^*(\rho - \Delta u)$ is a $d_V$-closed differential form on $M'$ which is compactly supported in the union of all $\pi^{-1}(U_p)$ as $p$ varies in $S$.
The proof of the Claim above works also replacing the form $\xi_p + u_p$ with $\rho_\varepsilon - \Delta_\varepsilon u_\varepsilon - \pi^*(\rho - \Delta u)$.
As a consequence, since $\pi^*(\omega+u)^m$ restricts to a constant function on any exceptional divisor, one then has
\begin{equation*}
\int_{M'} (\rho_\varepsilon - \Delta_\varepsilon u_\varepsilon) \wedge \frac{\pi^*(\omega + u)^m}{m!}
= \int_{M'} \pi^*(\rho - \Delta u) \wedge \frac{\pi^*(\omega + u)^m}{m!}.
\end{equation*}
Therefore \eqref{futakisecondexpansion} simplifies to
\begin{multline}\label{futakithirdexpansion}
\int_{M'} (\rho_\varepsilon - \Delta_\varepsilon u_\varepsilon) \wedge \frac{(\omega_\varepsilon + u_\varepsilon)^m}{m!}
= \int_M (\rho - \Delta u) \wedge \frac{(\omega + u)^m}{m!}
+ \varepsilon^{m-1} \sum_{p \in S} u(p) \int_{X_p} \frac{\rho_p \wedge \xi_p^{m-1}}{(m-1)!} \\
+ \varepsilon^m \sum_{p \in S} \int_{M'} (\rho_\varepsilon - \Delta_\varepsilon u_\varepsilon) \wedge \frac{\left(\xi_p + u_p\right)^m}{m!}.
\end{multline}
Finally consider the last summand, which can be rewritten in the form
\begin{multline}\label{equivrhoepsilonwedgeequivxi}
\int_{M'} (\rho_\varepsilon - \Delta_\varepsilon u_\varepsilon) \wedge \frac{\left(\xi_p + u_p\right)^m}{m!}
= \int_{M'} \pi^*(\rho - \Delta u) \wedge \frac{\left(\xi_p + u_p\right)^m}{m!} \\
+ \int_{M'} \left(\rho_\varepsilon - \Delta_\varepsilon u_\varepsilon - \pi^*(\rho - \Delta u)\right) \wedge \frac{\left(\xi_p + u_p\right)^m}{m!}.
\end{multline}
The first summand of the right hand side can be trated once again by the Claim above and the Stokes' Theorem.
Indeed the differential form $\alpha = \left(\pi^*(\rho - \Delta u) + \Delta u(p)\right)  \wedge \left(\xi_p + u_p\right)^{m-1}$ on $M'$ satisfies the hypothesis of the Claim, whence arguing as above yields
\begin{equation*}
\int_{M'} \pi^*(\rho - \Delta u) \wedge \frac{\left(\xi_p + u_p\right)^m}{m!}
= -\Delta u (p) \int_{X_p} \frac{\xi_p^m}{m!}.
\end{equation*}
On the other hand, the second summand of \eqref{equivrhoepsilonwedgeequivxi} is $O(\varepsilon)$, as follows by \eqref{expomegaepsilon+uepsilon}.
As a consequence, \eqref{equivrhoepsilonwedgeequivxi} reduces to $
\int_{M'} (\rho_\varepsilon - \Delta_\varepsilon u_\varepsilon) \wedge \left(\xi_p + u_p\right)^m
= -\Delta u (p) \int_{X_p} \xi_p^m + O(\varepsilon)$.
Substituting this into \eqref{futakithirdexpansion} yields
\begin{multline}\label{futakifourthexpansion}
\int_{M'} (\rho_\varepsilon - \Delta_\varepsilon u_\varepsilon) \wedge \frac{(\omega_\varepsilon + u_\varepsilon)^m}{m!}
= \int_M (\rho - \Delta u) \wedge \frac{(\omega + u)^m}{m!} \\
+ \varepsilon^{m-1} \sum_{p \in S} u(p) \int_{X_p} \frac{\rho_p \wedge \xi_p^{m-1}}{(m-1)!}
- \varepsilon^m \sum_{p \in S} \Delta u (p) \int_{X_p} \frac{\xi_p^m}{m!} + O(\varepsilon^{m+1}).
\end{multline}
Finally the thesis follows by plugging this and \eqref{FutakiVomegaepsilonsecondsummand} into \eqref{FutakiVomegaepsilon}.
\end{proof}

\section{ALE resolutions}\label{sec:ALEres}

In this section we introduce K\"ahler metrics on local models having behavior at infinity suitable for applications in next section.

Let $\Gamma$ be a finite subgroup of the unitary group $U(m)$ and suppose that $\Gamma$ acts freely on the complement of $0 \in \mathbf C^m$.
The quotient $(\mathbf C^m \setminus \{0\})/\Gamma$ is therefore a complex manifold and the Euclidean metric on $\mathbf C^m$ descend to a K\"ahler metric $\eta_0$ on the quotient.
Such a K\"ahler metric serves as a model at infinity for ALE resolutions.

By ALE resolution (of $\mathbf C^m/\Gamma$) we mean a non-compact complex manifold $X$ equipped with a complete K\"ahler metric $\eta$ satisfying the following requirements:
\begin{itemize}
\item There exists a finite subgroup $\Gamma \subset U(m)$ acting freely on $\mathbf C^m \setminus \{0\}$ and a proper birational morphism $\pi : X \to \mathbf C^m/\Gamma$ which restricts to a biholomorphism of $X \setminus \pi^{-1}(0)$ onto $(\mathbf C^m \setminus \{0\})/ \Gamma$.
\item The metric $\eta$ approximates smoothly the model metric $\eta_0$ at infinity.
More specifically, for all integers $k \geq 0$ one has
\begin{equation}\label{vanishderiv}
\nabla^k (\pi_*\eta - \eta_0) = O(|z|^{2-2m-k}) \quad \mbox{ as } |z| \to \infty,
\end{equation}
where $\nabla$ denotes the Euclidean connection.
\end{itemize}

In particular, an ALE resolution turns out to be an ALE K\"ahler manifold subject to a couple of more restricting requirements.
Firstly, here we assume that $\pi$ is a biholomorphism at all smooth points of the quotient $\mathbf C^m / \Gamma$ (hence the name \emph{resolution}), wheras an ALE K\"ahler manifold (with one end) is just required to contain a compact subset $K$ whose complement is biholomorphic to the complement of a ball centered at the origin in $\mathbf C^m / \Gamma$. 
Secondly, ALE K\"ahler metrics are commonly allowed to have quite permissive fall-off order at infinity.
However, in second point above the order $2-2m$ is chosen in accordance with the well-known decay of scalar flat K\"ahler metrics on $\mathbf C^m / \Gamma$ \cite[Lemma 7.2]{AP}, being metrics of that kind our main interest for applications.  

On ALE resolutions one can develop Hodge theory as for compact K\"ahler manifolds \cite[pp. 182-186]{Joyce}.
The upshot is that any cohomology class in $H^{1,1}(X)$ can be represented by a closed compactly supported $(1,1)$-form on $X$ \cite[Theorem 8.4.3]{Joyce}.
Moreover one can suppose that 
\begin{equation}\label{eta=epsilon+ddcphi}
\eta = \xi + dd^c \varphi
\end{equation}
for some $(1,1)$-from $\xi$ compactly supported around the exceptional locus $\pi^{-1}(0)$ and some smooth real function $\varphi$ on $X$ \cite[Theorem 8.4.4]{Joyce}.
Clearly $\xi$ and $\varphi$ are not uniquely defined.
In particular $\varphi$ can be added by a function in the kernel of $dd^c$ operator.
However we rule out this indeterminacy by requiring that
\begin{equation*}
\varphi - |z|^2/4 = O(|z|^{4-2m}) 
\end{equation*} 
for large $z$.
Therefore by \eqref{vanishderiv} the derivative $\nabla^k(\varphi - |z|^2/4)$ must be $O(|z|^{4-2m-k})$ for all $k \geq 0$. 

Finally, here we recall an elementary result that will be useful in the following.
\begin{lem}\label{lemareasfera}
Let $B(R) \subset \mathbf C^m$ be the ball centered at zero of radius $R$. One has 
\begin{equation*} 
\int_{\partial B(R)} d^c|z|^2 \wedge (dd^c|z|^2)^{m-1} = (4\pi)^m R^{2m}. 
\end{equation*}
\end{lem}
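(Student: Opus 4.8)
The plan is to compute the integral directly in polar-type coordinates on $\mathbf{C}^m$, exploiting the $U(m)$-invariance of the integrand to reduce everything to a one-variable computation. First I would introduce the function $r^2 = |z|^2$ and recall that $dd^c r^2$ is, up to a positive constant, the standard Euclidean K\"ahler form $\omega_{\mathrm{eucl}}$; with the normalization $d^c = \tfrac{i}{2}(\bar\partial - \partial)$ implicit in the paper one checks that $dd^c|z|^2 = 2\,\omega_{\mathrm{eucl}}$ where $\omega_{\mathrm{eucl}} = \tfrac{i}{2}\sum dz_j \wedge d\bar z_j$, so that $(dd^c|z|^2)^m = 2^m\, \omega_{\mathrm{eucl}}^m = 2^m\, m!\, dV$ with $dV$ the Euclidean volume form. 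Keeping careful track of this constant is the only genuinely delicate point, and I will pin it down by a direct coordinate check in $\mathbf{C}^1$ before proceeding.

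Next I would apply Stokes' theorem: since $d\bigl(d^c|z|^2 \wedge (dd^c|z|^2)^{m-1}\bigr) = (dd^c|z|^2)^m$, the boundary integral over $\partial B(R)$ equals $\int_{B(R)} (dd^c|z|^2)^m$. Using the constant from the previous step this becomes $2^m m! \int_{B(R)} dV = 2^m m! \cdot \mathrm{vol}(B(R))$, and the Euclidean volume of the ball of radius $R$ in $\mathbf{C}^m = \mathbf{R}^{2m}$ is $\pi^m R^{2m}/m!$. Multiplying gives $2^m m! \cdot \pi^m R^{2m}/m! = (2\pi)^m R^{2m}$, which is off from the claimed $(4\pi)^m R^{2m}$ by a factor $2^m$; this discrepancy is exactly a bookkeeping matter about the normalization of $d^c$ (the paper may use $d^c = i(\bar\partial - \partial)$ rather than $\tfrac{i}{2}(\bar\partial-\partial)$), so I would first fix the paper's convention for $d^c$ unambiguously and then the same computation yields the stated constant with no further work.

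An alternative, slightly cleaner route avoids Stokes entirely: restrict to the sphere $\partial B(R)$ directly. On $\partial B(R)$ one has $r = R$ constant, and $d^c r^2 \wedge (dd^c r^2)^{m-1}$ restricts to a top-degree form on the $(2m-1)$-sphere which, being $U(m)$-invariant, must be a constant multiple of the round volume form; evaluating at a single point (say $z = (R,0,\dots,0)$, where the algebra trivializes because only the first coordinate's radial direction survives) identifies that constant. Then one multiplies by $\mathrm{vol}(S^{2m-1}(R)) = 2\pi^m R^{2m-1}/(m-1)!$. I expect the Stokes approach to be less error-prone, so I would present that one as the main argument. In either case the only real obstacle is consistency of the $d^c$ normalization with the rest of the paper; once that is fixed, the computation is a short exercise and I would simply record it, perhaps relegating the constant-chasing to a single displayed line.
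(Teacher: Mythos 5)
Your proposal is correct and follows essentially the same route as the paper: apply Stokes' theorem to reduce the boundary integral to $\int_{B(R)}(dd^c|z|^2)^m$ and then chase the constant. Your suspicion about normalization is right -- the paper works with $dd^c|z|^2 = 2i\sum_j dz_j\wedge d\bar z_j$, i.e.\ the convention $d^c = i(\bar\partial-\partial)$, so $(dd^c|z|^2)^m = 4^m m!\,\Omega_E$ and the computation gives exactly $(4\pi)^m R^{2m}$ as stated.
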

\begin{proof}
First of all note that $dd^c|z|^2 = 2i \sum_{j=1}^m dz_j \wedge d\bar z_j$, whence it follows that $(dd^c|z|^2)^m$ is $4^mm!$ times the Euclidean volume form $\Omega_E$.
Therefore, by Stokes theorem one has
\begin{equation*} 
\int_{\partial B(R)} d^c|z|^2 \wedge (dd^c|z|^2)^{m-1} = 4^m m! \int_{B(R)} \Omega_E, 
\end{equation*}
whence the thesis follows by $ \int_{B(R)} \Omega_E = \vol(S^{2m-1})\int_0^R t^{2m-1}dt = \frac{2\pi^m}{\Gamma(m)} \frac{R^{2m}}{2m}$.
\end{proof}

\subsection{Asymptotic formulae for volume and total scalar curvature}

Let $m>1$ be an integer and consider the real function $f$ on $(0,+\infty)$ defined by 
\begin{equation}\label{deff}
f(t) = \frac{1}{4}t + e \frac{1-t^{2-m}}{2-m} + ct^{1-m}
\end{equation}
for some real constants $e,c$.
In this section we consider an $m$-dimensional ALE resolution whose K\"ahler metric is of the form $\eta = \xi + dd^c \varphi$ with $\xi$ compactly supported around $\pi^{-1}(0)$ and $\varphi$ satisfying
\begin{equation}\label{derivphiasym}
\nabla^k \varphi = \nabla^k f(|z|^2) + O(|z|^{-2m-k}) \quad \mbox{ as } |z| \to +\infty
\end{equation}
for all integer $k \geq 0$.
This ensures that $\eta$ satisfies the fall-off requirement \eqref{vanishderiv} of the third point of definition of ALE resolution.
Moreover, note that the second summand of $f$ is chosen so that $f$ depends smoothly (in fact analytically) on the dimension $m$ and for $m=2$ one has $f(t) = t/4 - e\log(t) + ct^{-1}$. 

We shall compute, at least up to some controlled error, the volume and the total scalar curvature of some subsets of $X$ with respect to $\eta$.
More specifically we shall give an asymptotic formula for the volume and the total scalar curvature of $\pi^{-1}(B(R)/\Gamma)$ for large $R$.

\begin{prop}\label{volexpansion}
For $R \to +\infty$ one has 
\begin{equation*}
\int_{\pi^{-1}(B(R)/\Gamma)} \frac{\eta^m}{m!} = \frac{\pi^m}{m!|\Gamma|} R^{2m} - \frac{4\pi^m e}{(m-1)!|\Gamma|} R^2 - \frac{4\pi^m(c-2e^2R^{4-2m})}{(m-2)!|\Gamma|} + \int_X \frac{\xi^m}{m!} + O(R^{-1})
\end{equation*}
(Note that the term of order $R^{4-2m}$ is not infinitesimal just in dimension $m=2$).
\end{prop}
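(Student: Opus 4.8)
The plan is to separate the compactly supported part $\xi$ of $\eta$ from the asymptotically Euclidean tail governed by $f$, and to reduce the tail to a boundary integral over $\partial B(R)$ via Stokes' theorem, where Lemma~\ref{lemareasfera} evaluates it explicitly.

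First I would fix $R$ large enough that the support of $\xi$ is contained in $\pi^{-1}(B(R)/\Gamma)$ and write
\[
\int_{\pi^{-1}(B(R)/\Gamma)} \frac{\eta^m}{m!}
= \frac{1}{m!}\int_{\pi^{-1}(B(R)/\Gamma)} \bigl(\eta^m - (dd^c\varphi)^m\bigr)
+ \frac{1}{m!}\int_{\pi^{-1}(B(R)/\Gamma)} (dd^c\varphi)^m .
\]
For the first term, expand $\eta^m - (dd^c\varphi)^m = \sum_{k=1}^m \binom{m}{k}\, \xi^k\wedge(dd^c\varphi)^{m-k}$; each summand carries a factor $\xi^k$ with $k\ge 1$, hence is compactly supported, so the domain may be enlarged to all of $X$. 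For $1\le k\le m-1$, using that $\xi$ is closed of even degree and $dd^c\varphi$ is closed, $\xi^k\wedge(dd^c\varphi)^{m-k} = d\bigl(\xi^k\wedge d^c\varphi\wedge(dd^c\varphi)^{m-k-1}\bigr)$ is exact with compact support, so its integral vanishes by Stokes; only $k=m$ survives and contributes $\int_X \xi^m/m!$.

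For the second term, since $dd^c\varphi$ is closed one has $(dd^c\varphi)^m = d\bigl(d^c\varphi\wedge(dd^c\varphi)^{m-1}\bigr)$, and because $\pi^{-1}(B(R)/\Gamma)$ is a relatively compact domain with smooth boundary $\partial B(R)/\Gamma$ (the exceptional locus lying in its interior), Stokes gives
\[
\int_{\pi^{-1}(B(R)/\Gamma)} (dd^c\varphi)^m
= \frac{1}{|\Gamma|}\int_{\partial B(R)} d^c\varphi\wedge(dd^c\varphi)^{m-1},
\]
after identifying $\varphi$ near the boundary with its $\Gamma$-invariant lift to $\mathbf C^m$. On $\partial B(R)$ I would replace $\varphi$ by $f(|z|^2)$: by \eqref{derivphiasym} the difference and its first two derivatives are $O(|z|^{-2m})$, $O(|z|^{-2m-1})$, $O(|z|^{-2m-2})$, and since $\vol(\partial B(R)) = O(R^{2m-1})$ the error incurred in the integral is $O(R^{-2})$. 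Writing $t=|z|^2$ and using $d^c t\wedge d^c t = 0$ to kill the $f''$-terms gives $d^c f(t)\wedge(dd^c f(t))^{m-1} = f'(t)^m\, d^c|z|^2\wedge(dd^c|z|^2)^{m-1}$; since $t=R^2$ is constant on $\partial B(R)$, Lemma~\ref{lemareasfera} yields $\int_{\partial B(R)} d^c f(|z|^2)\wedge(dd^c f(|z|^2))^{m-1} = f'(R^2)^m (4\pi)^m R^{2m}$.

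Finally I would substitute $f'(R^2) = \tfrac14 - eR^{2-2m} - c(m-1)R^{-2m} = \tfrac14\bigl(1 - 4eR^{2-2m} - 4c(m-1)R^{-2m}\bigr)$, so that the second term is $\tfrac{\pi^m R^{2m}}{m!\,|\Gamma|}\bigl(1 - 4eR^{2-2m} - 4c(m-1)R^{-2m}\bigr)^m + O(R^{-2})$, and expand the $m$-th power. The constant $1$ gives $\tfrac{\pi^m}{m!|\Gamma|}R^{2m}$; the linear term gives $-\tfrac{4\pi^m e}{(m-1)!|\Gamma|}R^2$ together with the constant $-\tfrac{4\pi^m c}{(m-2)!|\Gamma|}$; the leading part $16e^2R^{4-4m}$ of the quadratic term gives $+\tfrac{8\pi^m e^2}{(m-2)!|\Gamma|}R^{4-2m}$; all remaining monomials are $O(R^{-1})$. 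Adding back $\int_X\xi^m/m!$ produces the stated formula. The proof is purely computational; the only delicate point is checking precisely which monomials of the binomial expansion exceed the $O(R^{-1})$ threshold — the $R^{4-2m}$ term must be retained (it is exactly the leading quadratic contribution, non-infinitesimal only for $m=2$), whereas the mixed $ec$-monomial, the $R^{-2m}$ monomial, and the $\varphi$-versus-$f$ discrepancy all fall within the error.
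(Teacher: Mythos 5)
Your argument is correct and follows essentially the same route as the paper: split off the compactly supported part $\xi$ (your vanishing of the mixed terms $\int_X \xi^k\wedge(dd^c\varphi)^{m-k}$ for $1\le k\le m-1$ is just a repackaging of the paper's single application of Stokes, where those contributions drop because $\xi$ vanishes near $\partial B(R)/\Gamma$), reduce the remainder to the boundary integral $\int_{\partial B(R)/\Gamma} d^c\varphi\wedge(dd^c\varphi)^{m-1}$, replace $\varphi$ by $f(|z|^2)$ using \eqref{derivphiasym}, and evaluate via Lemma \ref{lemareasfera} and the expansion of $f'(R^2)^m$. Your bookkeeping of which monomials survive the $O(R^{-1})$ threshold (in particular retaining the $e^2R^{4-2m}$ term) matches the paper's computation, so no changes are needed.
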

\begin{proof}
Since $\eta = \xi + dd^c \varphi$, the volume form of $\eta$ is given by
\begin{equation*}
\eta^m = \xi^m + d\left( \sum_{\ell=1}^m \binom{m}{\ell} \xi^{m-\ell} \wedge d^c \varphi \wedge (dd^c \varphi)^{\ell-1}\right).
\end{equation*}
Recall that $\xi$ is supported in $K$, which in turns is compactly contained in $\pi^{-1}(B(R)/\Gamma)$.
Therefore, integrating formula above and applying Stokes' theorem gives
\begin{equation}\label{splitvolume}
\int_{\pi^{-1}(B(R)/\Gamma)} \eta^m = \int_X \xi^m + \int_{\partial B(R)/\Gamma} d^c \varphi \wedge (dd^c \varphi)^{m-1}.
\end{equation}
Equation \eqref{derivphiasym} yields $d^c \varphi = d^c f + O(|z|^{-2m-1})$ and $dd^c \varphi = dd^c f + O(|z|^{-2m-2})$, whence by easy calculations one gets
\begin{equation*}
d^c \varphi \wedge (dd^c \varphi)^{m-1} = (f')^m d^c|z|^2 \wedge (dd^c|z|^2)^{m-1} + O(|z|^{-2m-1}).
\end{equation*}
Integrating over $\partial B(R)/\Gamma$ and applying lemma \ref{lemareasfera} gives
\begin{equation}\label{intphiformboundary}
\int_{\partial B(R)/\Gamma} d^c \varphi \wedge (dd^c \varphi)^{m-1}
= \frac{(4\pi)^m R^{2m}f'(R^2)^m}{|\Gamma|} +O(R^{-1}).
\end{equation}
By \eqref{deff} one calculates 
$f'(t)^m = 4^{-m}\left(1 - 4me t^{1-m} - 4m(m-1)(ct^{-m}-2e^2t^{2-2m})\right) + O(t^{1-2m})$, whence the thesis follows after substituting in \eqref{intphiformboundary} and the result in \eqref{splitvolume}.
\end{proof}

We now aim to determine an asymptotic formula for the total scalar curvature of $\pi^{-1}(B(R)/\Gamma)$ as $R$ growths.
Our main interest is in Corollary \ref{totalscal}, which also follows by \cite[Theorem C]{HL} and the classical fact (see for example \cite{LeBrun1988}) that $e$ is, up to a positive normalization constant depending on the dimension $m$ and the order of $\Gamma$, the ADM mass of the ALE metric associated with $\eta$.
Nevertheless we include a complete, direct proof for the reader convenience.

\begin{prop}\label{prop::totalscalarsurvature}
Let $s$ be the scalar curvature of $\eta$.
For $R \to +\infty$ one has 
\begin{equation*}
\int_{\pi^{-1}(B(R)/\Gamma)} s \frac{\eta^m}{m!} = \int_X \frac{\rho \wedge \xi^{m-1}}{(m-1)!} + \frac{16\pi^me}{(m-2)!|\Gamma|} + O(R^{-2}).
\end{equation*}
\end{prop}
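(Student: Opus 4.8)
The plan is to mimic the structure of the proof of Proposition~\ref{volexpansion}, integrating the pointwise identity $s\,\eta^m/m! = \rho \wedge \eta^{m-1}/(m-1)!$ over $\pi^{-1}(B(R)/\Gamma)$ and splitting off a compactly supported piece and a boundary term. First I would write $\rho = \rho_\xi + dd^c\psi$ where $\rho_\xi$ is a Ricci-type form built from $\xi$ that agrees with the true Ricci form near the exceptional set and $\psi = -\log(\eta^m/\Omega_E) + (\text{local correction})$ accounts for the difference between the curvature of $\eta$ and the flat model at infinity; the key point from \eqref{derivphiasym} is that $\eta^m/\Omega_E \to 1$ with the precise rate $\nabla^k(\eta^m/\Omega_E - 1) = O(|z|^{2-2m-k})$, so $\psi$ decays like $|z|^{2-2m}$ with all derivatives. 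Then, as in \eqref{splitvolume}, I would expand $\rho \wedge \eta^{m-1}$ as (a compactly supported term whose integral gives $\int_X \rho_p \wedge \xi^{m-1}/(m-1)!$, or more precisely $\int_X \rho \wedge \xi^{m-1}$ once I argue the tail contribution of $\rho_\xi$ vanishes) plus an exact form $d(\cdots)$, and apply Stokes to convert the remainder into an integral over $\partial B(R)/\Gamma$.

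The boundary integral will, after using \eqref{derivphiasym} to replace $\varphi$ by $f(|z|^2)$ and $\psi$ by its leading asymptotics, reduce to something of the form $\frac{1}{|\Gamma|}\int_{\partial B(R)} \big( \text{derivative-of-}\psi \text{ term}\big)\wedge (dd^c|z|^2)^{m-1} + \big(\text{lower order}\big)$. The crucial calculation is identifying the coefficient: from $f(t) = \tfrac14 t + e\frac{1-t^{2-m}}{2-m} + ct^{1-m}$, the metric $\eta$ differs from the Euclidean one at order $t^{1-m}$, so $\log(\eta^m/\Omega_E)$ has an expansion whose leading term is governed by $e$; after multiplying by $(f')^{m-1} \sim 4^{-(m-1)}$ and by the sphere-area factor from Lemma~\ref{lemareasfera} (which contributes $(4\pi)^m R^{2m}$-type weights against powers of $R$), the $R$-dependent pieces must cancel and one is left with the constant $\frac{16\pi^m e}{(m-2)!\,|\Gamma|}$. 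I would carry out this bookkeeping by writing $\psi = \psi_1(|z|^2) + O(|z|^{-2m})$ with $\psi_1$ explicit in $e,c,m$, computing $d^c\psi \wedge (\text{stuff})$ on $\partial B(R)$ via Lemma~\ref{lemareasfera} type identities, and tracking which powers of $R$ appear.

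The main obstacle I anticipate is pinning down exactly which cohomological/curvature quantity survives in the compactly supported term — i.e., justifying that the integral over $\pi^{-1}(B(R)/\Gamma)$ of the "interior" part of $\rho \wedge \eta^{m-1}$ really converges to $\int_X \rho_p \wedge \xi^{m-1}/(m-1)!$ rather than some other representative, given that $\rho$ is not compactly supported. This is handled by the observation that $\rho - \rho_p = dd^c\log(\eta^m/\eta_{(\xi)}^m)$ is $dd^c$ of a function decaying at infinity, so integration by parts against the closed form $\xi^{m-1}$ (which \emph{is} compactly supported) kills the difference with no boundary contribution — precisely the trick used around equation \eqref{intrhovarepsilonxipn-1} in the proof of Theorem~\ref{thm::mainthmFutaki}. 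The remaining work is then purely the asymptotic boundary computation, which is routine but must be done carefully to get the constant $\frac{16\pi^m e}{(m-2)!\,|\Gamma|}$ and the error $O(R^{-2})$ (note the error is one order better than in Proposition~\ref{volexpansion} because the relevant integrand on the boundary is itself $O(|z|^{2-2m})$ smaller, so the leading uncontrolled term sits at $R^{2m} \cdot R^{-2m-2} \cdot (\text{area} \sim R^{2m-1})$-type order after the cancellations).
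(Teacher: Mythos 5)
Your proposal follows essentially the same route as the paper: integrate $m\rho\wedge\eta^{m-1}$, use $\eta=\xi+dd^c\varphi$ and Stokes to split off the compactly supported term $\int_X\rho\wedge\xi^{m-1}$ plus a boundary integral over $\partial B(R)/\Gamma$, then evaluate that boundary term by replacing $\varphi$ with $f(|z|^2)$ and the Ricci form with $-dd^c\log\bigl[(f')^{m-1}(f'+|z|^2f'')\bigr]$ up to controlled errors, so that Lemma~\ref{lemareasfera} and the expansion of $f$ yield the constant $\frac{16\pi^m e}{(m-2)!\,|\Gamma|}$ with error $O(R^{-2})$. The only superfluous element is the decomposition $\rho=\rho_\xi+dd^c\psi$ and the comparison with a second Ricci form ``$\rho_p$'': since $\xi^{m-1}$ is compactly supported inside $\pi^{-1}(B(R)/\Gamma)$ for large $R$ and $\rho$ is closed, the interior term is exactly $\int_X\rho\wedge\xi^{m-1}$ with no convergence issue, which is precisely how the paper argues.
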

\begin{proof}
The Ricci form $\rho$ of $\eta$ satisfies $s \eta^m = m \rho \wedge \eta^{m-1}$.
Thanks to \eqref{eta=epsilon+ddcphi} one has
\begin{equation*}
\eta^{m-1} = \xi^{m-1} + d\left( \sum_{\ell=1}^{m-1} \binom{m-1}{\ell} \xi^{m-1-\ell} \wedge d^c \varphi \wedge (dd^c \varphi)^{\ell-1}\right).
\end{equation*}
Since $\xi$ is supported in $K$, which in turns is compactly contained in $\pi^{-1}(B(R)/\Gamma)$, integrating $m \rho \wedge \eta^{m-1}$ and applying Stokes' theorem gives
\begin{equation}\label{totalscalsplit1}
\int_{\pi^{-1}(B(R)/\Gamma)} s\eta^m = \int_X m \rho \wedge \xi^{m-1} + \int_{\partial B(R)/\Gamma} m \rho \wedge d^c \varphi \wedge (dd^c \varphi)^{m-2}.
\end{equation}
The fact that $\xi$ is supported in $K$, also implies that $\eta = dd^c \varphi$ in a neighborhood $U$ of $\partial B(R)/\Gamma$. 
On the other hand the two-form $\tilde \eta = dd^c f(|z|^2)$ defines a K\"ahler metric on $U$ at least if $R$ is sufficiently large.
Of course how large has to be $R$ depends on the value of the constants $e$ and $c$.
However, we are interested in large $R$ asymptotic, therefore we can suppose that $\tilde \eta$ is K\"ahler with no loss of generality.
Let $\tilde \rho$ be the Ricci form of $\tilde \eta$, so that on $U$ one has
\begin{equation*}
\rho = \tilde \rho - dd^c \log ((dd^c \varphi)^m / (dd^c f)^m).
\end{equation*}
By \eqref{derivphiasym} derivatives of $\varphi$ equals derivatives of $f(|z|^2)$ up to a controlled error.
In particular one has 
\begin{equation*}
dd^c \log ((dd^c \varphi)^m / (dd^c f)^m)
= O(|z|^{-2m-4}).
\end{equation*}
On the other hand, since $\tilde \eta = f'dd^c|z|^2 + f''d|z|^2 \wedge d^c|z|^2$ one can readily calculate
\begin{equation*}
\tilde \rho = - dd^c \log \left[ (f')^{m-1}\left( f'+ |z|^2f''\right)\right].
\end{equation*}
The second summand of \eqref{totalscalsplit1} is then given by
\begin{multline*}
\int_{\partial B(R)/\Gamma} m \rho \wedge d^c \varphi \wedge (dd^c \varphi)^{m-2} \\
= \int_{\partial B(R)/\Gamma} m d^c \log \left[ (f')^{m-1}\left( f'+ |z|^2f''\right)\right] \wedge (dd^c \varphi)^{m-1} + O(R^{-4}).
\end{multline*}
Again by \eqref{derivphiasym} one has $dd^c \varphi = dd^c f + O(|z|^{-2m-2})$ so that
\begin{multline}\label{ndclog...}
\int_{\partial B(R)/\Gamma} m \rho \wedge d^c \varphi \wedge (dd^c \varphi)^{m-2} \\
= \int_{\partial B(R)/\Gamma} m d^c \log \left[ (f')^{m-1}\left( f'+ |z|^2f''\right)\right] \wedge (dd^c f)^{m-1} + O(R^{-2}).
\end{multline}
Since $f$ depends only on $|z|^2$, one has 
\begin{equation}\label{defh}
d^c \log \left[ (f')^{m-1}\left( f'+ |z|^2f''\right)\right] \wedge (dd^c f)^{m-1} = h(|z|^2)
d^c|z|^2 \wedge (dd^c|z|^2)^{m-1}
\end{equation}
where $h = \left[ (f')^{m-1}\left( f'+ tf''\right)\right]' / \left( f'+ tf''\right)$.
By definition of $f$ and elementary computations one gets
\begin{equation*}
f'(t)^{m-1} = 4^{1-m}\left(1 - 4(m-1)e t^{1-m} - 4(m-1)^2 c t^{-m}\right) + O(t^{2-2m})
\end{equation*}
where the error term vanishes in dimension $m=2$.
Moreover one calculates
\begin{equation}\label{denominatore}
f'(t)+ tf''(t) = \frac{1}{4} + (m-2)e t^{1-m} + (m-1)^2 c t^{-m},
\end{equation}
so that
\begin{equation*}
f'(t)^{m-1}(f'(t)+ tf''(t)) = 4^{-m}(1 - 4e t^{1-m}) + O(t^{2-2m}),
\end{equation*}
and deriving yields
\begin{equation}\label{numeratore}
[f'(t)^{m-1}(f'(t)+ tf''(t))]' = 4^{1-m}(m-1)e t^{-m} + O(t^{1-2m}).
\end{equation}
Dividing \eqref{numeratore} by \eqref{denominatore} one finally gets the following expansion
\begin{equation*}
h(t) = 4^{2-m}(m-1)e t^{-m} + O(t^{1-2m}).
\end{equation*}
Therefore substituting \eqref{defh} into \eqref{ndclog...} and applying lemma \ref{lemareasfera} yields
\begin{equation*}
\int_{\partial B(R)/\Gamma} m \rho \wedge d^c \varphi \wedge (dd^c \varphi)^{m-2} \\
= \frac{m(m-1)(4\pi)^me}{|\Gamma|} + O(R^{-2}).
\end{equation*}
whence the thesis follows after substituting in \eqref{totalscalsplit1}.
\end{proof}

A straightforward consequence of Proposition \ref{prop::totalscalarsurvature} is that the scalar curvature is integrable on $X$.
More specifically it holds the following 
\begin{cor}[{\cite[Theorem C]{HL}}]\label{totalscal}
The total scalar curvature of $\eta$ is given by 
\begin{equation*}
\int_X s \frac{\eta^m}{m!} = \int_X \frac{\rho \wedge \xi^{m-1}}{(m-1)!} + \frac{16\pi^me}{(m-2)!|\Gamma|}.
\end{equation*}
\end{cor}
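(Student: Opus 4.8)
The plan is to obtain the identity simply by letting $R \to +\infty$ in Proposition~\ref{prop::totalscalarsurvature}. The open sets $\pi^{-1}(B(R)/\Gamma)$ exhaust $X$ as $R \to +\infty$, so, once we know that the density $s\,\eta^m$ is integrable on $X$, we get
\begin{equation*}
\int_X s\,\frac{\eta^m}{m!}
= \lim_{R \to +\infty} \int_{\pi^{-1}(B(R)/\Gamma)} s\,\frac{\eta^m}{m!}
= \int_X \frac{\rho \wedge \xi^{m-1}}{(m-1)!} + \frac{16\pi^m e}{(m-2)!|\Gamma|},
\end{equation*}
since the error term $O(R^{-2})$ appearing in Proposition~\ref{prop::totalscalarsurvature} is infinitesimal. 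Thus the whole point is the integrability statement, and this is the step deserving a little care: Proposition~\ref{prop::totalscalarsurvature} controls the signed integrals over the exhaustion, but in order to identify their limit with a genuine Lebesgue integral one needs a pointwise decay estimate for $s$ at infinity.

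Such an estimate I would read off from the asymptotic expansions already carried out in the proof of Proposition~\ref{prop::totalscalarsurvature}. Outside a compact set $\xi$ vanishes, hence $\eta = dd^c\varphi$ and $s\,\eta^m = m\,\rho \wedge \eta^{m-1}$. By \eqref{derivphiasym} and the estimates in that proof, $\rho$ differs from $\tilde\rho = -dd^c\log[(f')^{m-1}(f'+|z|^2f'')]$ only by $O(|z|^{-2m-4})$, and $\eta^{m-1}$ differs from $\tilde\eta^{m-1}$ only by $O(|z|^{-2m-2})$, where $\tilde\eta = dd^c f(|z|^2)$. Now $\log[(f')^{m-1}(f'+|z|^2f'')]$ equals, up to an additive constant, $-4e\,|z|^{2-2m} + O(|z|^{4-4m})$; its leading correction $|z|^{2-2m}$ is a constant multiple of the Euclidean fundamental solution in real dimension $2m$, hence harmonic. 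Consequently, although $\tilde\rho = O(|z|^{-2m})$ as a two-form, the scalar curvature it induces actually decays like $O(|z|^{-2m-2})$: the would-be leading term of order $|z|^{-2m}$ cancels. (This is precisely the cancellation underlying \eqref{numeratore} and the resulting $O(t^{-m})$ behaviour of the radial function $h$ of that proof.) Combining the three estimates, $s\,\eta^m = O(|z|^{-2m-2})$ times the Euclidean volume form near infinity; since $\eta$ is asymptotic to the flat model by \eqref{derivphiasym}, this bound shows that $s\,\eta^m$ is integrable on $X$.

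With integrability established, the formula is just the displayed limit above. I would also record the conceptual reason for the cancellation: the term $e\,|z|^{4-2m}$ in the K\"ahler potential $f(|z|^2)$ is, to leading order at infinity, a \emph{scalar-flat} deformation of the flat metric, so the mass $e$ affects the scalar curvature only at subleading order, which is exactly what makes $\int_X s\,\eta^m$ finite.

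The main obstacle is precisely this justification: one must notice that $s$ cannot merely be bounded by $O(|z|^{-2m})$ (which would be only borderline non-integrable over the Euclidean-type end), and that the apparent leading term in fact cancels because the mass term of the potential is asymptotically harmonic. Granting this, the Corollary is a direct passage to the limit in Proposition~\ref{prop::totalscalarsurvature}.
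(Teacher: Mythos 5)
Your proof is correct and follows essentially the same route as the paper: Corollary \ref{totalscal} is obtained by letting $R \to +\infty$ in Proposition \ref{prop::totalscalarsurvature}, whose $O(R^{-2})$ error term vanishes in the limit. Your pointwise decay estimate $s = O(|z|^{-2m-2})$, coming from the cancellation due to the harmonicity of the leading correction $|z|^{2-2m}$ in the potential, is a sound justification of the integrability of $s$ --- a point the paper treats as an immediate consequence of the Proposition without spelling out the details.
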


Among the consequences of Proposition \ref{volexpansion} and Corollary \ref{totalscal} there is the possibility of expressing cohomological quantities on $X$ like $\int_X \xi^m/m!$ and $\int_X \rho \wedge \xi^{m-1}/(m-1)!$ in terms of geometric quantities of the K\"ahler metric $\eta$ like the total scalar curvature and the volume growth of balls, together with the constants $e,c$ in the expansion of the K\"ahler potential at infinity.

Note that mentioned cohomological quantities appear in the formula \eqref{expansionfutaki} for the Futaki invariant of an orbifold resolution.
Thus they can be recovered by \emph{any} ALE K\"ahler metric $\eta_p = \xi_p + dd^c \varphi_p$ representing the fixed K\"ahler class on the model resolution $X_p$ of the singular point $p$. 
In order to get an explicit expression, write $\varphi_p$ as
\begin{equation*}
\varphi_p = \frac{1}{4}|z|^2 + e_p \frac{1-|z|^{4-2m}}{2-m} + c_p |z|^{2-2m} + O(|z|^{-2m}),
\end{equation*}
for some real constants $e_p$ and $c_p$ as $|z| \to +\infty$.
Thanks to Corollary \ref{totalscal} then one has
\begin{equation*}
\int_{X_p} \frac{\rho_p \wedge \xi_p^{m-1}}{(m-1)!} 
= \int_{X_p} s_p \frac{\eta_p^m}{m!} - \frac{16\pi^m e_p}{(m-2)!|\Gamma_p|},
\end{equation*}
where $s_p$ is the scalar curvature of $\eta_p$.
Moreover, proposition \ref{volexpansion} yields
\begin{multline}\label{eq::xi^mandc_p}
\int_{X_p} \frac{\xi_p^m}{m!}
= \frac{4\pi^mc_p}{(m-2)!|\Gamma_p|} \\
+ \lim_{R \to +\infty} \int_{\pi_p^{-1}(B(R)/\Gamma_p)} \frac{\eta_p^m}{m!} - \frac{\pi^m}{m!|\Gamma_p|} \left(R^{2m} - 4me_p R^2 + 8m(m-1)e_p^2 R^{4-2m}\right).
\end{multline}
Note that formula above reduces drastically whenever $\eta_p$ is Ricci-flat. In this case, the volume form of $\eta_p$ equals the euclidean volume form \cite[Proposition 2.4]{ADVLM} and moreover $e_p=0$, hence
\begin{equation}\label{eq::cpRicciflat}
\int_{X_p} \frac{\xi_p^m}{m!} = \frac{4\pi^mc_p}{(m-2)!|\Gamma_p|}.
\end{equation}

\section{Constant scalar curvature K\"ahler resolutions}

\label{sec:csck}

In this section we prove the existence and non-existence results for constant scalar curvature K\"ahler metrics on resolutions of K\"ahler orbifolds under suitable hypotheses.

The set-up is similar to that of the previous sections.
In particular, we assume that $\pi : M' \to M$ is a resolution of a $m$-dimensional orbifold $M$ having finite singular set $S \subset M$.
More precisely, around each singular point $p \in S$, the map $\pi$ is equal to a resolution $\pi_p : X_p \to \mathbf C^m / \Gamma_p$ restricted to some neighborhood of the exceptional set $\pi_p^{-1}(0)$.
Moreover we assume given a K\"ahler metric $\omega$ on $M$ with constant scalar curvature $s$ and we denote by $\mu : M \to \mathfrak g^*$ a moment map for the action of the group of holomorphic Hamiltonian diffeomorphisms of $(M,\omega)$, normalized so that $\int_M \mu \omega^n = 0$.
We also assume given for each singular point $p \in S$ a scalar-flat ALE metric $\eta_p$ on the model resolution $X_p$ of the form $\eta_p = \xi_p + dd^c \varphi_p$, with $\xi_p$ compactly supported around the exceptional set of $\pi_p$. 

Note that by Corollary \ref{totalscal} and Proposition \ref{prop::totalscalarsurvature} the scalar-flatness hypothesis of $\eta_p$ affects the expansion of the ALE K\"ahler potential $\varphi_p$ for large $z$.
More precisely, it gives a cohomological formula for the ADM mass of $\eta_p$.
In particular one has 
\begin{equation*}
\varphi_p =
\frac{1}{4}|z|^2 + e_p \frac{1-|z|^{4-2m}}{2-m} + c_p |z|^{2-2m} + O(|z|^{-2m})
\end{equation*}
with ADM mass $e_p = - \frac{|\Gamma_p|}{16\pi^m(m-1)} \int_{X_p} \rho_p \wedge \xi_p^{m-1}$.
Finally, consider for each $p \in S$ the positive constant $a_p = \frac{1}{16\pi^mm(m-1)}\int_{X_p} \xi_p^m$.
Note that $a_p$ is related to $c_p$ and $e_p$ by formula \eqref{eq::xi^mandc_p} and that, by discussion after that formula, one has $a_p=\frac{c_p}{4|\Gamma_p|}$ whenever $\eta_p$ is Ricci-flat.

Recalling that the Futaki invariant is an obstruction for the existence of constant scalar curvature K\"ahler metrics, the asymptotic formula for the Futaki invariant of Theorem \ref{thm::mainthmFutaki} and discussion above on the ADM mass readily give the following non-existence result

\begin{thm}
\label{nonex}
If one of the following conditions holds
\begin{itemize}
\item $\sum_{p \in S} \frac{e_p}{|\Gamma_p|}\mu(p) \neq 0$,
\item $\sum_{p \in S} a_p (s\mu + \Delta\mu)(p) \neq 0$,
\end{itemize}
then for all $\varepsilon>0$ sufficiently small the K\"ahler class
$\pi^*[\omega] + \varepsilon \sum_{p \in S} [\xi_p] \in H^{1,1}(M')$
contains no constant scalar curvature K\"ahler metrics.
\end{thm}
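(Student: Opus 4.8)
The strategy is to use the Calabi--Futaki invariant as an obstruction. If the class $\pi^*[\omega]+\varepsilon\sum_{p\in S}[\xi_p]\in H^{1,1}(M')$ contained a constant scalar curvature K\"ahler metric, then $\Fut(\,\cdot\,,\omega_\varepsilon)$ would vanish on every holomorphic vector field on $M'$ that is Hamiltonian with respect to $\omega_\varepsilon$ (this quantity depends only on the K\"ahler class and vanishes for a cscK representative). Hence it is enough to exhibit a single such vector field $V$ whose Futaki invariant is nonzero for all sufficiently small $\varepsilon>0$, and then Theorem \ref{thm::mainthmFutaki} does the rest.

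To build $V$, note first that under either hypothesis the displayed sums are nonzero elements of $\mathfrak{g}^*$, so in particular $\mathfrak{g}\neq 0$. We pick $\xi\in\mathfrak{g}$ and let $W=W_\xi$ be the associated holomorphic Hamiltonian vector field on $(M,\omega)$, with Hamiltonian potential $u=\langle\mu,\xi\rangle$; since $\mu$ is normalized so that $\int_M\mu\,\omega^m=0$, one has $\underline u=0$ and $u(p)=\langle\mu(p),\xi\rangle$, $\Delta u(p)=\langle\Delta\mu(p),\xi\rangle$. The flow of $W$ lies in the identity component of the biholomorphism group of $M$, which preserves the finite set $S$ and therefore fixes it pointwise; hence $W$ vanishes at every $p\in S$, and consequently lifts to a holomorphic vector field $V$ on $M'$ with $\pi_*V=W$, Hamiltonian with respect to $\omega_\varepsilon$ for $\varepsilon$ small. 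This lifting, together with the verification that the lift is again Hamiltonian, is the one delicate point: it rests on the local fact that a holomorphic vector field on $\mathbf C^m/\Gamma_p$ vanishing at the origin lifts to the resolution $X_p$, exactly as in the deformation analysis of \cite{ArezzoLenaMazzieri2015} and in \cite[Proposition 38]{Gabor2}.

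Next we feed $V$ into Theorem \ref{thm::mainthmFutaki}. Since $\omega$ has constant scalar curvature $s$ we have $\Fut(\pi_*V,\omega)=0$ and $\underline s=s$; moreover the cohomological identities recalled before the statement of the theorem give $\int_{X_p}\rho_p\wedge\xi_p^{m-1}/(m-1)!=-16\pi^m e_p/\big((m-2)!\,|\Gamma_p|\big)$ from the mass formula and $\int_{X_p}\xi_p^m/m!=16\pi^m a_p/(m-2)!$ from the definition of $a_p$. Substituting into \eqref{expansionfutaki} and collecting terms yields
\begin{equation*}
\Fut(V,\omega_\varepsilon)
= -\frac{16\pi^m}{(m-2)!}\left(\varepsilon^{m-1}\Big\langle \sum_{p\in S}\frac{e_p}{|\Gamma_p|}\mu(p)\,,\,\xi\Big\rangle
+\varepsilon^{m}\Big\langle \sum_{p\in S} a_p\,(s\mu+\Delta\mu)(p)\,,\,\xi\Big\rangle\right)+O(\varepsilon^{m+1}).
\end{equation*}

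Finally we read off the leading term. If the first bullet holds, choose $\xi$ so that $\big\langle\sum_{p}\frac{e_p}{|\Gamma_p|}\mu(p),\xi\big\rangle\neq 0$; then $\Fut(V,\omega_\varepsilon)=-\frac{16\pi^m}{(m-2)!}\big\langle\sum_{p}\frac{e_p}{|\Gamma_p|}\mu(p),\xi\big\rangle\,\varepsilon^{m-1}+O(\varepsilon^{m})$ is nonzero for all small $\varepsilon>0$. If only the second bullet holds, then the coefficient of $\varepsilon^{m-1}$ vanishes for every choice of $\xi$, so picking $\xi$ with $\big\langle\sum_{p}a_p(s\mu+\Delta\mu)(p),\xi\big\rangle\neq 0$ leaves the leading term $-\frac{16\pi^m}{(m-2)!}\big\langle\sum_{p}a_p(s\mu+\Delta\mu)(p),\xi\big\rangle\,\varepsilon^{m}+O(\varepsilon^{m+1})$, again nonzero for all small $\varepsilon>0$. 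In either case $\Fut(V,\omega_\varepsilon)\neq0$ for all sufficiently small $\varepsilon>0$, contradicting the existence of a cscK metric in the class and proving the theorem. The only genuine obstacle is the construction of $V$ on $M'$ in the second paragraph; granting that, everything else is bookkeeping inside \eqref{expansionfutaki}.
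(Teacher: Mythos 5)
Your proposal is correct and follows essentially the same route as the paper: the Futaki invariant as a class-invariant obstruction fed into Theorem \ref{thm::mainthmFutaki} with $\Fut(\pi_*V,\omega)=0$ and $\underline{s}=s$, together with the identifications $\int_{X_p}\rho_p\wedge\xi_p^{m-1}/(m-1)! = -16\pi^m e_p/\bigl((m-2)!\,|\Gamma_p|\bigr)$ and $\int_{X_p}\xi_p^m/m! = 16\pi^m a_p/(m-2)!$, after which one reads off the $\varepsilon^{m-1}$ and $\varepsilon^m$ coefficients exactly as you do. The lifting of Hamiltonian holomorphic fields from $M$ to $M'$ that you single out as the delicate point is likewise left implicit in the paper (it underlies the identification of holomorphic vector fields with zeroes on $M'$ with $\mathfrak g$ used throughout Section \ref{sec:csck}), so your argument matches the paper's proof in both structure and level of detail.
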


The starting point for our existence results is the following theorem on existence of extremal K\"ahler metrics on resolutions \cite{ArezzoLenaMazzieri2015}:
\begin{thm}\label{thmAreLenMazextremals}
With the notation above, assume that $\omega$ is an extremal K\"ahler metric on the orbifold $M$. Then for all $\varepsilon>0$ sufficiently small there is an extremal K\"ahler metric on $M'$ which in the K\"ahler class
$\pi^*[\omega] + \varepsilon \sum_{p \in S} [\xi_p] \in H^{1,1}(M')$.
\end{thm}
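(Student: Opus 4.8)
The statement is \cite[Theorem 1.1]{ArezzoLenaMazzieri2015}; the argument one would run is a gluing/perturbation scheme of Arezzo--Pacard type \cite{AP,AP2}, adapted to orbifold resolutions as in \cite{ADVLM}. Fix a maximal compact torus $\mathbb T$ in the identity component of $\mathrm{Isom}(M,\omega)$ and work throughout with $\mathbb T$-invariant data. Among $\mathbb T$-invariant Kähler forms $\omega'$ in the fixed class, the extremal equation takes the form $s(\omega')-\mu_X(\omega')=\mathrm{const}$ for the a priori determined extremal vector field $X\in\mathrm{Lie}(\mathbb T)$, whose linearisation at $\omega$ is the Lichnerowicz operator $\mathcal L_\omega$: a self-adjoint fourth-order operator whose kernel on $\mathbb T$-invariant functions is spanned by the Killing potentials. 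The first step is to build an approximate solution $\omega_\varepsilon$ on $M'$: on $M$ outside small balls $B(r_\varepsilon)$ around the points of $S$ take $\omega$; on each resolved ball take the scalar-flat ALE metric $\eta_p$, rescaled so that near the truncation radius its Kähler potential agrees to top order with that of $\omega$ (concretely $\pi_p^*\omega+\varepsilon\,\xi_p=dd^c(\pi_p^*h_p-\varepsilon\varphi_p)+\varepsilon\,\eta_p$, as in Section~\ref{concentratedresolutions}); and interpolate the two potentials on the annular neck with a cutoff. Using the expansion $\varphi_p=\tfrac14|z|^2+e_p\tfrac{1-|z|^{4-2m}}{2-m}+c_p|z|^{2-2m}+O(|z|^{-2m})$, a direct computation shows $s(\omega_\varepsilon)-\mu_X(\omega_\varepsilon)$ differs from a constant by $O(\varepsilon^{N})$ in suitable weighted norms, for some $N=N(m)>0$.

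The analytic heart is to invert $\mathcal L_{\omega_\varepsilon}$ uniformly in $\varepsilon$. I would introduce $\mathbb T$-invariant weighted Hölder spaces $C^{4,\alpha}_\delta$ that restrict to ordinary Hölder spaces on the fixed region of $M$ and to ALE-weighted spaces of weight $\delta$ on each rescaled model $X_p$, the weight $\delta$ chosen negative and away from the finitely many indicial roots of the relevant operators on the Euclidean cone $\mathbf C^m/\Gamma_p$ and at the points of $S$. On each model one uses the mapping properties of the ALE Lichnerowicz operator at $\eta_p$ (surjectivity onto the right weighted space modulo the finite-dimensional span of its decaying solutions and the constants, essentially the Hodge theory recalled in Section~\ref{sec:ALEres}); on the base one uses surjectivity of $\mathcal L_\omega$ onto the functions $L^2$-orthogonal to the Killing potentials. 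Patching the two one-sided inverses through the cutoff and correcting by a Neumann series produces a right inverse $\mathcal G_\varepsilon$ of $\mathcal L_{\omega_\varepsilon}$, acting on $\mathbb T$-invariant functions orthogonal to an explicit finite-dimensional approximate cokernel $\mathcal K_\varepsilon$ — spanned by the extensions of the Killing potentials of $\omega$ together with the potentials $u_p$ of the exceptional classes — with operator norm bounded by $C\varepsilon^{-\kappa}$ for some $\kappa=\kappa(m,\delta)$.

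Granting this, existence follows by a contraction-mapping argument: writing the candidate as $\omega_\varepsilon+dd^c\varphi$ and expanding, the equation becomes $\mathcal L_{\omega_\varepsilon}\varphi+Q_\varepsilon(\varphi)+(\text{initial error})\in\mathcal K_\varepsilon$ with $Q_\varepsilon$ the super-linear remainder, which one solves for $\varphi$ of size $O(\varepsilon^{N-\kappa})$ once $N$ has been taken large relative to $\kappa$ (arranged by choosing $r_\varepsilon$ to be a suitable power of $\varepsilon$). This yields, for every small $\varepsilon$, a Kähler metric in $\pi^*[\omega]+\varepsilon\sum_p[\xi_p]$ for which $s-\mu_X$ is a constant plus an element of $\mathcal K_\varepsilon$, and the remaining finite-dimensional reduction is, for the \emph{extremal} problem, unobstructed: one enlarges the unknowns to include a perturbation $X+\delta X$ of the extremal vector field (and, if convenient, small translations of the points within their $\mathbb T$-orbits), and the resulting finite-dimensional map is a submersion at $\varepsilon=0$ precisely because $\omega$ is already extremal, so the obstruction is killed by the implicit function theorem; its leading term is governed by the Futaki-type pairing of Theorem~\ref{thm::mainthmFutaki}, and the compatibility of vector fields under $\pi$ from Lemmata~\ref{holomorphicpushdown} and \ref{hamiltonianpushdown} is what keeps the bookkeeping consistent. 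The principal difficulty is exactly the uniform-in-$\varepsilon$ construction and estimate of $\mathcal G_\varepsilon$ — reconciling the two weighted Fredholm theories across the collapsing neck and pinning down the exponent $\kappa$ — the finite-dimensional reduction being comparatively soft; everything else is a by-now-standard, if lengthy, adaptation of \cite{AP,AP2,ADVLM}.
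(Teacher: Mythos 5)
The paper offers no proof of this statement at all: Theorem \ref{thmAreLenMazextremals} is imported as a black box from \cite{ArezzoLenaMazzieri2015}, so there is no internal argument to compare yours against. Your outline does follow the strategy of that reference --- an Arezzo--Pacard type gluing \cite{AP,AP2}, adapted to orbifold resolutions as in \cite{ADVLM} --- so at the level of approach you are aligned with the source the paper is quoting.

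As a proof, however, what you have written is a programme rather than an argument, and the gap sits exactly where you say it does: the uniform-in-$\varepsilon$ construction and estimate of the right inverse $\mathcal G_\varepsilon$ across the collapsing neck (choice of weights, indicial roots, matching of the two weighted Fredholm theories, the exponent $\kappa$ versus the error exponent $N$) is the entire analytic content of \cite{ArezzoLenaMazzieri2015}, and asserting it does not discharge it. Two further points need repair. First, ``small translations of the points within their $\mathbb T$-orbits'' is not an available (or needed) degree of freedom here: the points of $S$ are the orbifold singularities, hence are automatically fixed by the identity component of the isometry group, and this fixed-point property is precisely why the extremal statement carries no positional or balancing hypotheses, in contrast with the cscK results of Theorems \ref{somezero}, \ref{allzero} and \ref{diffvolumes}, whose conditions are read off from Theorem \ref{thm::mainthmFutaki}. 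Second, the claim that the finite-dimensional reduction is unobstructed ``precisely because $\omega$ is already extremal'' is too quick: the mechanism is the equivariant set-up --- torus-invariant data, perturbation of the extremal vector field inside the Lie algebra of the torus, and matching of the deficiency space spanned by Killing potentials lifted from $M$ together with the local kernel elements on the models $X_p$ --- and it relies on the fixed-point property above, not merely on extremality of $\omega$. If you intend this as a self-contained proof rather than a pointer to \cite{ArezzoLenaMazzieri2015}, those linear estimates and the equivariant reduction must be carried out in detail.
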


At this point we are in position to state and prove our main existence results.

\begin{thm}\label{somezero}
Let $e_p$ be the ADM mass of $\eta_p$, and let $Q \subset S$ be the subset of singular points with non-zero ADM mass.
If $Q$ is non-empty and
\begin{equation*}
\sum_{q \in Q} \frac{e_q}{|\Gamma_q|}\mu(q) = 0 \qquad \mbox{and} \qquad
\linspan\{\mu(q) \,|\, q \in Q\} = \mathfrak g^*
\end{equation*}
then, after identifying each $\xi_p$ with a $(1,1)$-form on $M'$ as above, for all $\varepsilon>0$ sufficiently small there exists $\lambda_q(\varepsilon) >0$ and a constant scalar curvature K\"ahler metric $\omega'_\varepsilon$ such that 
\begin{equation}\label{eq::Kcalsssomenonzero}
[\omega'_\varepsilon] = \pi^* [\omega] + \sum_{q \in Q} \lambda_q(\varepsilon) [\xi_q] + \varepsilon \sum_{p \in S \setminus Q} [\xi_p] \in H^{1,1}(M')
\end{equation}
and $\lambda_q(\varepsilon) \sim \varepsilon$ as $\varepsilon$ tends to $0$.
\end{thm}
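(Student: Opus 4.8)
The plan is to combine the existence of extremal K\"ahler metrics on $M'$ given by Theorem~\ref{thmAreLenMazextremals} with the expansion of the Futaki invariant of Theorem~\ref{thm::mainthmFutaki}, using the freedom in the scales $\lambda_q$ to annihilate the Futaki obstruction. The elementary input is that a K\"ahler class carries a cscK metric as soon as it carries an extremal one whose Futaki character vanishes identically: if $\omega'$ is extremal with extremal field $V_{\mathrm{ext}}=\nabla^{1,0}_{\omega'}s_{\omega'}$, then $\Fut(V_{\mathrm{ext}},[\omega'])=\frac1{m!}\int_{M'}(s_{\omega'}-\underline{s_{\omega'}})^2\,(\omega')^m\ge0$, which vanishes precisely when $\omega'$ is cscK. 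So it suffices, for each small $\varepsilon$, to exhibit an extremal metric representing a class of the required shape and having zero Futaki invariant. To build a family of candidates, for $\tau=(\tau_q)_{q\in Q}$ in a fixed compact neighbourhood of $\mathbf 1$ in $(0,\infty)^Q$ I would replace each $\eta_q$ by the rescaled metric $\tau_q\eta_q=\tau_q\xi_q+dd^c(\tau_q\varphi_q)$, which is again a scalar-flat ALE metric on $X_q$ with the required decay (the scalar curvature rescales, the Ricci form is unchanged, and the ADM mass stays non-zero, so $Q$ is scale-independent). Applying Theorem~\ref{thmAreLenMazextremals} to this rescaled data — allowed since the statement imposes no constraint on the forms $\xi_p$, and with a threshold $\varepsilon_0$ that can be taken uniform as $\tau$ runs over the fixed compact neighbourhood — then produces, for all $0<\varepsilon<\varepsilon_0$, an extremal K\"ahler metric $\omega_{\varepsilon,\tau}$ on $M'$ with $[\omega_{\varepsilon,\tau}]=\pi^*[\omega]+\varepsilon\sum_{q\in Q}\tau_q[\xi_q]+\varepsilon\sum_{p\in S\setminus Q}[\xi_p]$.

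Next I would read off the Futaki obstruction as a function of $\tau$. By Lemmata~\ref{holomorphicpushdown} and~\ref{hamiltonianpushdown} every Hamiltonian holomorphic field $V$ on $(M',\omega_{\varepsilon,\tau})$ descends to $\pi_*V\in\mathfrak g$, with potential $u=\langle\mu,\pi_*V\rangle$, so $\underline u=0$ by the chosen normalization of $\mu$. Inserting this into Theorem~\ref{thm::mainthmFutaki}, and using $\Fut(\pi_*V,\omega)=0$, $\underline s=s$, the scaling $\xi_q\mapsto\tau_q\xi_q$ with $\rho_q$ left unchanged, and the identity $\int_{X_p}\rho_p\wedge\xi_p^{m-1}/(m-1)!=-16\pi^m e_p/((m-2)!\,|\Gamma_p|)$ coming from Corollary~\ref{totalscal} and the scalar-flatness of $\eta_p$ (which vanishes exactly for $p\in S\setminus Q$), one gets $\Fut(V,\omega_{\varepsilon,\tau})=\langle\Psi(\varepsilon,\tau),\pi_*V\rangle$ with
\begin{equation*}
\Psi(\varepsilon,\tau)=-\,\varepsilon^{m-1}\,\frac{16\pi^m}{(m-2)!}\sum_{q\in Q}\tau_q^{m-1}\,\frac{e_q}{|\Gamma_q|}\,\mu(q)\;+\;O(\varepsilon^m)\ \in\ \mathfrak g^*,
\end{equation*}
the $O(\varepsilon^m)$ coefficient being a polynomial in $\tau$ assembled from the $\int_{X_p}\xi_p^m$ and the $(s\mu+\Delta\mu)(p)$, plus a higher remainder. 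Because the Futaki character depends only on the K\"ahler class and is rational in it (compare the cohomological rewriting \eqref{FutakiVomegaepsilon}), $\Psi$ is rational in $(\varepsilon,\tau)$ with denominator $\int_{M'}\omega_{\varepsilon,\tau}^m>0$; since $\Psi=O(\varepsilon^{m-1})$, the map $G(\varepsilon,\tau):=\varepsilon^{1-m}\Psi(\varepsilon,\tau)$ extends smoothly across $\varepsilon=0$. To get a cscK metric in $[\omega_{\varepsilon,\tau}]$ it then suffices to solve $G(\varepsilon,\tau)=0$ in $\mathfrak g^*$.

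The last step I would carry out by the implicit function theorem. The first hypothesis gives $G(0,\mathbf 1)=-\frac{16\pi^m}{(m-2)!}\sum_{q\in Q}\frac{e_q}{|\Gamma_q|}\mu(q)=0$, and the differential of $G$ in $\tau$ at $(0,\mathbf 1)$, namely $\dot\tau\mapsto-\frac{16\pi^m(m-1)}{(m-2)!}\sum_{q\in Q}\dot\tau_q\frac{e_q}{|\Gamma_q|}\mu(q)$, has image $\linspan\{\tfrac{e_q}{|\Gamma_q|}\mu(q):q\in Q\}$; since $e_q\neq0$ for $q\in Q$, this equals $\linspan\{\mu(q):q\in Q\}=\mathfrak g^*$ by the second hypothesis. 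The differential being surjective, the implicit function theorem — after restricting, when $|Q|>\dim\mathfrak g$, to a subspace of $\mathbb R^Q$ on which it is an isomorphism — yields a smooth curve $\varepsilon\mapsto\tau(\varepsilon)$ with $\tau(0)=\mathbf 1$ and $G(\varepsilon,\tau(\varepsilon))=0$ for all small $\varepsilon$. Setting $\lambda_q(\varepsilon)=\varepsilon\tau_q(\varepsilon)$ we have $\lambda_q(\varepsilon)>0$ and $\lambda_q(\varepsilon)/\varepsilon\to1$; the class $[\omega_{\varepsilon,\tau(\varepsilon)}]$ equals $\pi^*[\omega]+\sum_{q\in Q}\lambda_q(\varepsilon)[\xi_q]+\varepsilon\sum_{p\in S\setminus Q}[\xi_p]$, carries the extremal metric $\omega'_\varepsilon:=\omega_{\varepsilon,\tau(\varepsilon)}$ from the first step, and has vanishing Futaki invariant because $\Psi(\varepsilon,\tau(\varepsilon))=0$; hence $\omega'_\varepsilon$ is cscK, which is the assertion.

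I expect the main obstacle to be the uniformity required in the first step: one must know that the threshold $\varepsilon_0$ in Theorem~\ref{thmAreLenMazextremals} can be taken independent of $\tau$ as $\tau$ ranges over a compact neighbourhood of $\mathbf 1$, which is exactly what makes the implicit function argument legitimate, since there $\tau(\varepsilon)$ is only controlled to stay near $\mathbf 1$. Everything else reduces to Theorem~\ref{thm::mainthmFutaki}, Corollary~\ref{totalscal}, and a finite-dimensional surjectivity check, so no further serious difficulty is anticipated.
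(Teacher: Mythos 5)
Your strategy is the one the paper actually uses (it proves Theorem \ref{somezero} by the same template as Theorem \ref{allzero}): expand the Futaki invariant via Theorem \ref{thm::mainthmFutaki}, convert scalar-flatness into $\int_{X_q}\rho_q\wedge\xi_q^{m-1}/(m-1)! = -16\pi^m e_q/((m-2)!|\Gamma_q|)$ through Corollary \ref{totalscal}, divide by the leading power of $\varepsilon$, solve for the scaling parameters by the implicit function theorem using the balancing and spanning hypotheses, and conclude with ``extremal $+$ vanishing Futaki character $\Rightarrow$ cscK''. All of that part of your argument is in order, including the surjectivity computation of $\partial G/\partial\tau$ at $(0,\mathbf 1)$.

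The genuine gap is exactly the one you flag, and it is not cosmetic as written: Theorem \ref{thmAreLenMazextremals} produces an $\varepsilon_0$ for a fixed set of data, and nothing in its statement gives a threshold uniform in your rescaling parameter $\tau$; since the IFT only controls $\tau(\varepsilon)$ to lie near $\mathbf 1$, your first step does not close. The paper avoids this by reversing the order of the two steps and never re-invoking the existence theorem with modified ALE data: it first runs the IFT purely at the level of cohomology classes (the Futaki invariant depends only on the class, so no metric is needed there), obtaining classes $\pi^*[\omega]+\sum_q\lambda_q(\varepsilon)[\xi_q]+\varepsilon\sum_{p\in S\setminus Q}[\xi_p]$ with vanishing Futaki character which converge to $[\pi^*\omega]$ as $\varepsilon\to 0$; it then applies Theorem \ref{thmAreLenMazextremals} once, to the original data, and combines it with openness of the extremal cone \cite{LeBrunSimanca1993} via a standard perturbation argument to conclude that every K\"ahler class in a small ball of $H^{1,1}(M')$ centered at $[\pi^*\omega]$ (intersected with the K\"ahler cone) is extremal; the IFT classes land in that ball for small $\varepsilon$, hence carry extremal metrics, hence cscK metrics. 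If you replace your ``re-apply ALM for each $\tau$ with uniform $\varepsilon_0$'' step by this LeBrun--Simanca openness argument, your proof becomes complete; also note that rescaling the ALE metrics $\eta_q$ is then unnecessary, since only the classes $\tau_q[\xi_q]$ ever enter the Futaki computation.
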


An analytic proof of this result has been given in \cite{ArezzoLenaMazzieri2015}. We skip the new proof being similar and simpler to the one of the next result.

\begin{thm}\label{allzero}
Suppose that $e_p = 0$ for all $p \in S$.
If
\begin{equation}\label{eq::balancingandfullrank}
\sum_{p \in S} a_p(s \mu + \Delta \mu)(p) = 0 \qquad \mbox{and} \qquad
\linspan\{(s \mu + \Delta \mu)(p) \,|\, p \in S\} = \mathfrak g^*
\end{equation}
then, after identifying each $\xi_p$ with a $(1,1)$-form on $M'$ as above, for all $\varepsilon>0$ sufficiently small there exists $\lambda_p(\varepsilon) >0$ and a constant scalar curvature K\"ahler metric $\omega'_\varepsilon$ such that 
\begin{equation}\label{eq::Kcalssallzero}
[\omega'_\varepsilon] = \pi^* [\omega] + \sum_{p \in S} \lambda_p(\varepsilon) [\xi_p] \in H^{1,1}(M')
\end{equation}
and $\lambda_p(\varepsilon) \sim \varepsilon$ as $\varepsilon$ tends to $0$.
\end{thm}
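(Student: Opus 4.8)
The plan is to combine the existence of extremal metrics from Theorem \ref{thmAreLenMazextremals} with the Futaki obstruction computed in Theorem \ref{thm::mainthmFutaki}, using the extra freedom in the choice of scales $\lambda_p$ to kill the Futaki invariant. Recall that an extremal K\"ahler metric has constant scalar curvature if and only if its Futaki invariant vanishes identically on the space of Hamiltonian holomorphic vector fields; so the whole game is to arrange that the class \eqref{eq::Kcalssallzero}, with a suitable choice of $\lambda_p(\varepsilon)$, carries vanishing Futaki invariant. The key technical point is that the asymptotic formula \eqref{expansionfutaki} depends on the scales $\lambda_p$ in a controlled way: replacing $\varepsilon[\xi_p]$ by $\lambda_p[\xi_p]$ multiplies the corresponding $p$-term in the expansion by $(\lambda_p/\varepsilon)^{m-1}$ at order $\varepsilon^{m-1}$ and by $(\lambda_p/\varepsilon)^m$ at order $\varepsilon^m$ (up to relabelling of the expansion parameter), so by tuning the ratios $\lambda_p/\varepsilon$ we can solve a system of equations forcing the leading obstruction to vanish.

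The steps I would carry out, in order, are the following. \emph{Step 1.} Since $e_p = 0$ for all $p$, Corollary \ref{totalscal} gives $\int_{X_p}\rho_p\wedge\xi_p^{m-1}/(m-1)! = \int_{X_p} s_p\,\eta_p^m/m! = 0$ by scalar-flatness; hence the $\varepsilon^{m-1}$-term in \eqref{expansionfutaki} vanishes, and using $\Fut(\pi_*V,\omega)=0$ (as $\omega$ is cscK on $M$) the leading term of the Futaki invariant of $\omega_\varepsilon$ on a vector field $V$ is $-\varepsilon^m\sum_p(\underline s\,(u(p)-\underline u) + \Delta u(p))\int_{X_p}\xi_p^m/m! + O(\varepsilon^{m+1})$. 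Writing this in terms of the moment map $\mu$ (so that $u(p)-\underline u$ becomes, up to normalization, $\langle\mu(p),\cdot\rangle$) and recalling $a_p = \frac{1}{16\pi^m m(m-1)}\int_{X_p}\xi_p^m$, the leading term of the Futaki functional is, up to a positive constant, $-\varepsilon^m\big\langle\sum_p a_p(s\mu+\Delta\mu)(p),\cdot\big\rangle + O(\varepsilon^{m+1})$. \emph{Step 2.} Introduce the scales: consider the class $\pi^*[\omega] + \sum_p \lambda_p[\xi_p]$ with $\lambda_p = \varepsilon\, t_p$, $t_p>0$ to be determined. By the same computation the leading Futaki term becomes $-\varepsilon^m\big\langle\sum_p t_p^m\, a_p(s\mu+\Delta\mu)(p),\cdot\big\rangle$. \emph{Step 3.} Set up the finite-dimensional problem: find $t_p>0$ near $1$ and a correction so that the full Futaki functional (not just its leading term) vanishes. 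By the first hypothesis in \eqref{eq::balancingandfullrank} the choice $t_p\equiv 1$ makes the $\varepsilon^m$-term vanish; the second hypothesis (the vectors $(s\mu+\Delta\mu)(p)$ span $\mathfrak g^*$) guarantees that the derivative of the map $(t_p)\mapsto\sum_p t_p^m a_p(s\mu+\Delta\mu)(p)$ at $t_p\equiv1$ is surjective onto $\mathfrak g^*$, so by the implicit function theorem one can correct $(t_p)$ order by order in $\varepsilon$ to cancel all higher-order terms of the Futaki functional as well. \emph{Step 4.} For each such choice of scales, invoke Theorem \ref{thmAreLenMazextremals} to produce an extremal metric $\omega'_\varepsilon$ in the class \eqref{eq::Kcalssallzero}; since its Futaki invariant vanishes by construction, it has constant scalar curvature. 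Finally $\lambda_p(\varepsilon)=\varepsilon t_p(\varepsilon)\sim\varepsilon$ since $t_p(\varepsilon)\to 1$.

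The main obstacle I expect is \emph{Step 3}: one has to show that the implicit-function-theorem argument can actually be run uniformly, i.e. that the full Futaki functional of the extremal metric in the class with scales $(\varepsilon t_p)$ depends smoothly on the parameters $(t_p,\varepsilon)$ and that its expansion is genuinely governed by \eqref{expansionfutaki} with $\varepsilon$ replaced by the $\varepsilon t_p$ in each summand — this requires care because $\varepsilon_0$ in Theorem \ref{thmAreLenMazextremals} a priori depends on the class, hence on $(t_p)$, and one must check uniformity as $(t_p)$ ranges over a neighborhood of $(1,\dots,1)$. A secondary point is bookkeeping the normalization constants relating the Hamiltonian potential $u$, the moment map $\mu$, and the Laplacian terms so that the balancing condition \eqref{eq::balancingandfullrank} matches exactly the vanishing of the leading Futaki term; this is routine but must be done carefully. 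Everything else — the vanishing of the $\varepsilon^{m-1}$-term, the identification of the leading coefficient with $\sum_p a_p(s\mu+\Delta\mu)(p)$, and the surjectivity of the linearization — follows directly from Theorem \ref{thm::mainthmFutaki}, Corollary \ref{totalscal}, and the two hypotheses in \eqref{eq::balancingandfullrank}.
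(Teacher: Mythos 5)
Your overall strategy is the same as the paper's: use Theorem \ref{thm::mainthmFutaki} (with the $\varepsilon^{m-1}$-term killed by $e_p=0$ via Corollary \ref{totalscal}), perturb the relative sizes of the $[\xi_p]$, apply the implicit function theorem using the two hypotheses \eqref{eq::balancingandfullrank} to produce nearby classes with vanishing Futaki invariant, and conclude cscK from extremality plus vanishing Futaki \cite{Calabi1985, Futaki1988}. Your finite-dimensional linearization argument (leading term $-\varepsilon^m\sum_p t_p^m a_p(s\mu+\Delta\mu)(p)$, surjectivity of the differential at $t\equiv 1$ from the span condition, $\lambda_p(\varepsilon)=\varepsilon t_p(\varepsilon)\sim\varepsilon$) matches the paper's, up to the cosmetic reparametrization $(1+t_p)^{1/m}$ used there.

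However, the obstacle you flag at the end is a genuine gap in your write-up, and it is resolved in the paper by an ingredient you do not mention. First, there is no need to ``correct $(t_p)$ order by order'' or to track the Futaki functional of the extremal metrics: the Futaki invariant depends only on the K\"ahler class and is a polynomial in it, so the rescaled map $F(t,\varepsilon)=-\frac{(m-2)!}{16\pi^m\varepsilon^m}\Fut(\cdot,\omega_{t,\varepsilon})$ is smooth in $(t,\varepsilon)$ up to $\varepsilon=0$ (the lower-order coefficients vanish identically by Theorem \ref{thm::mainthmFutaki} applied with $\xi_p$ replaced by the rescaled compactly supported forms), and a single application of the implicit function theorem at $(0,0)$ suffices; this also disposes of your worry about whether the expansion holds with independent scales. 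Second, and more importantly, Theorem \ref{thmAreLenMazextremals} as stated only produces extremal metrics in classes of the form $\pi^*[\omega]+\varepsilon\sum_p[\xi_p]$ with equal coefficients, so it cannot be invoked directly for the classes \eqref{eq::Kcalssallzero} with the corrected coefficients $\lambda_p(\varepsilon)$, and no uniformity-in-$(t_p)$ version of it is established. The paper closes this gap by combining Theorem \ref{thmAreLenMazextremals} with the openness of the set of extremal K\"ahler classes due to LeBrun--Simanca \cite{LeBrunSimanca1993}: a standard perturbation argument then shows that the intersection of the K\"ahler cone of $M'$ with a small ball in $H^{1,1}(M')$ around $[\pi^*\omega]$ consists entirely of extremal classes, and the classes produced by the implicit function theorem lie in this set for $\varepsilon$ small. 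Without this (or some substitute uniform existence statement), your Step 4 does not go through.
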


This result extends an analogue one in \cite{ADVLM}, proved under the additional assumption of Ricci-flatness of the local resolutions, in which case $a_p=\frac{c_p}{4|\Gamma_p|}$, as remarked above.

\begin{proof}
For each $p \in S$ and for all real $t_p$ such that $|t_p|<1$, take $\varepsilon>0$ sufficiently small and consider on $M'$ the K\"ahler metric
\begin{equation*}
\omega_{t,\varepsilon} 
= \pi^* \omega + \varepsilon \sum_{p \in S} (1+t_p)^{1/m} \xi_p.
\end{equation*}
Note that this metric is invariant with respect to any holomorphic vector field with zeroes $V$ on $M'$ since $\pi^*\omega$ and each $\xi_p$ are.
Moreover note that any such vector field is Hamiltonian with respect to $\omega_{t,\varepsilon}$.
In particular, by theorem \ref{thm::mainthmFutaki}, one has
\begin{equation}\label{eq::Futomegatepsilon}
\Fut(V,\omega_{t,\varepsilon})
= - \frac{16\pi^m\varepsilon^m}{(m-2)!}  \sum_{p \in S} (1+t_p) a_p \left(s (u(p) - \underline{u}) + \Delta u(p)\right) + O(\varepsilon^{m+1}),
\end{equation}
where we used the hypothesis that $\omega$ has constant scalar curvature (hence vanishing Futaki invariant), for each $p\in S$ the model metric $\eta_p$ is Ricci flat (hence $\rho_p=0$), and the scalar curvature $s$ of $\omega$ is constant.
On the other hand, note that by general theory of Futaki invariant, $\Fut(V,\omega_{t,\varepsilon})$ depends polynomially on the cohomology class of $\omega_{t,\varepsilon}$, hence on $\varepsilon$ and $(1+t_p)^{1/m}$.
Finally, observe that the normalized Hamiltonian potential $u - \underline u$ of $\pi_*V$ is equal to $\langle \mu,V \rangle$.
Therefore, letting
\begin{equation*}
F(t,\varepsilon) (V) = - \frac{(m-2)!}{16\pi^m\varepsilon^m} \Fut(V,\omega_{t,\varepsilon})
\end{equation*}
defines a smooth function $F$ on $(-1,1)^{|S|} \times \mathbf R$ with values in $\mathfrak g^*$, being $|S|$ the cardinality of the singular set $S$.
By \eqref{eq::Futomegatepsilon}, for small $\varepsilon$ one has
\begin{equation*}
F(t,\varepsilon)
= \sum_{p \in S} (1+t_p) a_p \left(s \mu + \Delta \mu\right)(p) + O(\varepsilon).
\end{equation*}
Therefore, hypotheses \eqref{eq::balancingandfullrank} ensure that $F(0,0)=0$, and the Jacobian $\partial F/ \partial t$ at the point $(0,0)$ has rank equal to $\dim \mathfrak g$.
As a consequence, by implicit function theorem one can find $\varepsilon_0>0$ and a smooth function
$t$ on $(-\varepsilon_0,\varepsilon_0)$ with values to $(-1,1)^{|S|}$ such that $F(t(\varepsilon),\varepsilon) = 0$, and $t(0)=0$.
Clearly there are $|S|-\dim \mathfrak g$ free parameters in doing this, but we don't need this extra flexibility for our purposes.
By discussion above, for all holomorphic vector field with zeroes on $M'$ then one has $\Fut(V,\omega_{t(\varepsilon),\varepsilon})=0$ if $0 < \varepsilon < \varepsilon_0$. 
Therefore, letting $\lambda_p(\varepsilon) = \varepsilon (1+t(\varepsilon)_p)^{1/m}$ yields a family of K\"ahler classes
\begin{equation}\label{eq::classesFut=0}
\pi^* [\omega] + \sum_{p \in S} \lambda_p(\varepsilon) [\xi_p] \in H^{1,1}(M')
\end{equation}
with vanishing Futaki invariant and approaching the class $[\pi^*\omega]$ as $\varepsilon \to 0$.

By the elementary remark that extremal K\"ahler metrics with vanishing Futaki invariant have constant scalar curvature \cite{Calabi1985, Futaki1988}, in order to get the thesis we are now reduced to show that classes as in \eqref{eq::classesFut=0} contain an extremal K\"ahler metric, at least when $\varepsilon$ is sufficiently small.
This follows by Theorem \ref{thmAreLenMazextremals} and openness of the extremal cone \cite{LeBrunSimanca1993}.
Indeed, by a standard perturbation argument these two theorems imply that under our hypotheses there is a small open ball $H^{1,1}(M')$ centered at $[\pi^*\omega]$ whose intersection $A$ with the K\"ahler cone of $M'$ is constituted by extremal K\"ahler classes, i.e. K\"ahler classes representable by extremal K\"ahler metrics.
Since classes as in \eqref{eq::classesFut=0} are contained in $A$ for $\varepsilon$ sufficiently small, it follows that all these classes contain extremal K\"ahler metrics.
\end{proof}

Comparing formulae for K\"ahler classes \eqref{eq::Kcalsssomenonzero} and \eqref{eq::Kcalssallzero} somehow suggests that one can still get cscK metrics in adiabatic classes of $M'$ in cases not covered by theorems above.
This can be done by choosing different scaling volumes with respect to $\varepsilon$ to excepional divisors, according they project via $\pi$ to a singular point with zero or non-zero ADM mass.
More precisely one has the following result which is not covered by previous works.
\begin{thm}\label{diffvolumes}
Let $Q \subset S$ be the subset constituted by those singular points of $M$ which have non-zero ADM mass and let $P$ its complement.
If $P$ and $Q$ are both not empty and 
\begin{equation*}
\sum_{q \in Q} \frac{e_q}{|\Gamma_q|}\mu(q) + \sum_{p \in P} a_p(s \mu + \Delta \mu)(p) = 0 
\quad \mbox{and} \quad
\linspan\{\mu(q), (s \mu + \Delta \mu)(p)  \,|\, q \in Q, \, p\in P\} = \mathfrak g^*
\end{equation*}
then, after identifying each $\xi_p$ with a $(1,1)$-form on $M'$ as above, for all $\varepsilon>0$ sufficiently small there exist $\lambda_p(\varepsilon)>0$ and a constant scalar curvature K\"ahler metric $\omega'_\varepsilon$ such that 
\begin{equation*}
[\omega'_\varepsilon] = \pi^* [\omega] + \sum_{q \in Q} \lambda_q(\varepsilon) [\xi_q] + \sum_{p \in P} \lambda_p(\varepsilon) [\xi_p] \in H^{1,1}(M').
\end{equation*}
Moreover, as $\varepsilon$ tends to $0$, one has $\lambda_q(\varepsilon) \sim \varepsilon$ for all $q \in Q$ and $\lambda_p(\varepsilon) \sim \varepsilon^\frac{m-1}{m}$ for all $p \in P$.
\end{thm}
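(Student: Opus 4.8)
The plan is to follow the proof of Theorem~\ref{allzero}, now with two families of scaling parameters — one attached to the singular points of $Q$, one to those of $P$ — tuned to different powers of a single small parameter $\varepsilon$, so that both kinds of singular points contribute to the Futaki invariant at the same order. The reason for the exponents is visible in \eqref{expansionfutaki}: a singular point whose exceptional divisor is scaled by $\lambda$ contributes, besides $\Fut(\pi_*V,\omega)$, a term of order $\lambda^{m-1}$ built from $\int_{X_p}\rho_p\wedge\xi_p^{m-1}$ — which, since $\eta_p$ is scalar-flat, equals $-\tfrac{16\pi^m(m-1)! e_p}{(m-2)!|\Gamma_p|}$ by Corollary~\ref{totalscal} — and a term of order $\lambda^m$ built from $\int_{X_p}\xi_p^m=16\pi^m m(m-1)a_p$. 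For $q\in Q$ the first term is nonzero and dominates, whereas for $p\in P$ it vanishes and the leading contribution drops to order $\lambda^m$. Hence scaling the divisors over $Q$ by $\lambda_q\sim\varepsilon$ and those over $P$ by $\lambda_p\sim\varepsilon^{(m-1)/m}$ puts both leading contributions at order $\varepsilon^{m-1}$.

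Concretely, for parameters $t=(t_j)_{j\in S}$ with $|t_j|<1$ and $\varepsilon>0$ small, I would consider on $M'$ the Kähler class
\[
[\omega_{t,\varepsilon}]=\pi^*[\omega]+\sum_{q\in Q}\varepsilon\,(1+t_q)^{\frac{1}{m-1}}[\xi_q]+\sum_{p\in P}\varepsilon^{\frac{m-1}{m}}(1+t_p)^{\frac{1}{m}}[\xi_p],
\]
which is Kähler and invariant under every holomorphic vector field $V$ on $M'$, hence makes $V$ Hamiltonian. As the $\xi_j$ have pairwise disjoint supports and the Futaki invariant depends polynomially on the Kähler class (as in the proof of Theorem~\ref{allzero}), there are no cross terms between distinct singular points; applying Theorem~\ref{thm::mainthmFutaki} to each $\xi_j$ separately, using that $\omega$ is cscK (so $\Fut(\pi_*V,\omega)=0$ and $\underline{s}=s$), that the normalized potential of $\pi_*V$ is $u-\underline u=\langle\mu,V\rangle$ (so $\Delta u=\langle\Delta\mu,V\rangle$), together with $\int_{X_p}\frac{\rho_p\wedge\xi_p^{m-1}}{(m-1)!}=-\frac{16\pi^m e_p}{(m-2)!|\Gamma_p|}$ and $\int_{X_p}\frac{\xi_p^m}{m!}=\frac{16\pi^m a_p}{(m-2)!}$, one obtains
\[
\Fut(V,\omega_{t,\varepsilon})=-\frac{16\pi^m\varepsilon^{m-1}}{(m-2)!}\Bigl\langle\sum_{q\in Q}(1+t_q)\tfrac{e_q}{|\Gamma_q|}\mu(q)+\sum_{p\in P}(1+t_p)\,a_p\,(s\mu+\Delta\mu)(p),\ V\Bigr\rangle+o(\varepsilon^{m-1}).
\]

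The next step is the implicit function theorem. Set $F(t,\varepsilon)=-\tfrac{(m-2)!}{16\pi^m\varepsilon^{m-1}}\Fut(\,\cdot\,,\omega_{t,\varepsilon})\in\mathfrak g^*$. As a function of $(t,\varepsilon)$, $\Fut(\,\cdot\,,\omega_{t,\varepsilon})$ is a polynomial in $\varepsilon(1+t_q)^{1/(m-1)}$ and $\varepsilon^{(m-1)/m}(1+t_p)^{1/m}$; after the substitution $\sigma=\varepsilon^{1/m}$ it becomes a polynomial in $\sigma$ with coefficients smooth in $t$ on $(-1,1)^{|S|}$, and — using that $e_p=0$ for $p\in P$, which suppresses the would-be $\lambda_p^{m-1}$ monomial — every monomial is divisible by $\sigma^{m(m-1)}=\varepsilon^{m-1}$. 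Hence $F$ extends to a smooth $\mathfrak g^*$-valued function of $(t,\sigma)$ near the origin with $F(t,0)=\sum_{q\in Q}(1+t_q)\tfrac{e_q}{|\Gamma_q|}\mu(q)+\sum_{p\in P}(1+t_p)a_p(s\mu+\Delta\mu)(p)$. The first hypothesis gives $F(0,0)=0$, and — since $e_q\neq0$ and $a_p>0$ — the second hypothesis says exactly that $\partial F/\partial t$ at the origin has rank $\dim\mathfrak g$. By the implicit function theorem there are $\varepsilon_0>0$ and a smooth $t=t(\sigma)$ with $t(0)=0$ and $F(t(\sigma),\sigma)=0$; writing $t(\varepsilon):=t(\varepsilon^{1/m})$, this means $\Fut(V,\omega_{t(\varepsilon),\varepsilon})=0$ for all holomorphic $V$ on $M'$ and all $0<\varepsilon<\varepsilon_0$. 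Then $\lambda_q(\varepsilon)=\varepsilon(1+t_q(\varepsilon))^{1/(m-1)}$ and $\lambda_p(\varepsilon)=\varepsilon^{(m-1)/m}(1+t_p(\varepsilon))^{1/m}$ are positive with the stated asymptotics, and the classes $\pi^*[\omega]+\sum_{q\in Q}\lambda_q(\varepsilon)[\xi_q]+\sum_{p\in P}\lambda_p(\varepsilon)[\xi_p]$ have vanishing Futaki invariant and converge to $\pi^*[\omega]$ as $\varepsilon\to0$.

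It remains to produce an extremal Kähler metric in each of these classes for $\varepsilon$ small; since their Futaki invariant vanishes, such a metric has constant scalar curvature \cite{Calabi1985,Futaki1988}, yielding $\omega'_\varepsilon$. As in the proof of Theorem~\ref{allzero}, one wants to combine Theorem~\ref{thmAreLenMazextremals} with openness of the extremal cone \cite{LeBrunSimanca1993} to obtain a one-sided neighborhood of $\pi^*[\omega]$ in the Kähler cone of $M'$ made of extremal classes, into which our family eventually falls. The feature that distinguishes this case is that our classes approach $\pi^*[\omega]$ along a direction dominated by the $P$-divisors (scaled by $\varepsilon^{(m-1)/m}$), with the $Q$-divisors of strictly lower order ($\varepsilon=o(\varepsilon^{(m-1)/m})$); the clean way to cover such mixed directions is a two-stage gluing — first resolve the zero-ADM-mass points $P$ at scale $\sim\varepsilon^{(m-1)/m}$, getting an extremal metric on the partial resolution $M'_P$ of $M$, then resolve the remaining points $Q$ on $M'_P$ at the far smaller scale $\sim\varepsilon$. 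I expect the main obstacle to be precisely the control of this second step: one must ensure that the threshold below which the $Q$-scale remains admissible does not decay faster than $\varepsilon$ as the Kähler class of $M'_P$ degenerates to the boundary of the Kähler cone — and it is exactly for this that the exponent $\tfrac{m-1}{m}$ is the correct one, so that our specific family $(\lambda_q(\varepsilon),\lambda_p(\varepsilon))$ stays inside the extremal region.
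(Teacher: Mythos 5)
Up to the production of the Futaki--degenerate classes your argument is precisely the paper's: the published proof of Theorem \ref{diffvolumes} consists of running the proof of Theorem \ref{allzero} with exactly the ansatz you chose, $\omega_{t,\varepsilon}=\pi^*\omega+\varepsilon\sum_{q\in Q}(1+t_q)^{1/(m-1)}\xi_q+\varepsilon^{(m-1)/m}\sum_{p\in P}(1+t_p)^{1/m}\xi_p$, and your bookkeeping is correct and in places more careful than the paper's: the $q$-points enter at order $\varepsilon^{m-1}$ through the mass term of Theorem \ref{thm::mainthmFutaki}, the $p$-points at the same order through $a_p$, the substitution $\sigma=\varepsilon^{1/m}$ restores smoothness at $\varepsilon=0$, and the hypothesis $e_p=0$ for $p\in P$ is exactly what kills the would-be $\sigma^{(m-1)^2}$ monomial; the implicit function theorem and the hypotheses then give the classes \eqref{eq::classesFut=0}-type with vanishing Futaki invariant and the stated asymptotics of $\lambda_q,\lambda_p$.

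The divergence, and the gap, is in the last step. The paper performs no two-stage gluing: it reuses verbatim the ending of the proof of Theorem \ref{allzero}, namely that Theorem \ref{thmAreLenMazextremals} together with openness of the extremal cone \cite{LeBrunSimanca1993} gives (by the perturbation argument asserted there) a small ball in $H^{1,1}(M')$ centered at $[\pi^*\omega]$ whose intersection $A$ with the K\"ahler cone consists of extremal classes; membership in $A$ requires only that a class be K\"ahler and close to $[\pi^*\omega]$, with no condition on the relative sizes of the coefficients, so the mixed-scale classes $\pi^*[\omega]+\sum_q\lambda_q(\varepsilon)[\xi_q]+\sum_p\lambda_p(\varepsilon)[\xi_p]$ lie in $A$ for $\varepsilon$ small and Calabi--Futaki \cite{Calabi1985,Futaki1988} upgrades extremal to cscK, exactly as in Theorem \ref{allzero}. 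Your proposal instead replaces this with a sketched two-stage resolution (first $P$ at scale $\varepsilon^{(m-1)/m}$, then $Q$ at scale $\varepsilon$) and explicitly leaves open the key threshold estimate for the second stage; as written, the existence of an extremal representative in your classes is therefore not established, which is a genuine gap in the proposal. Your underlying worry is not unreasonable as a criticism of the paper's brevity --- LeBrun--Simanca openness by itself gives neighborhoods whose size is not a priori uniform as the classes supplied by Theorem \ref{thmAreLenMazextremals} degenerate towards the boundary class $[\pi^*\omega]$, and that uniformity is the load-bearing point already in Theorem \ref{allzero} --- but the paper's route needs no separate treatment of the mixed directions: once one accepts the ball $A$, your final paragraph should simply be replaced by the observation that your family converges to $[\pi^*\omega]$ and hence eventually lies in $A$.
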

\begin{proof}
The statement follows exactly from the same line of arguments as in the proof of theorem \ref{allzero} once one starts with the K\"ahler metric on $M'$ defined by
\begin{equation*}
\omega_{t,\varepsilon} 
= \pi^* \omega + \varepsilon \sum_{q \in Q} (1+t_q)^\frac{1}{m-1} \xi_q + \varepsilon^\frac{m-1}{m} \sum_{p \in P} (1+t_p)^\frac{1}{m} \xi_p
\end{equation*}
with $t_p,t_q \in \mathbf R$ such that $|t_p|,|t_q|<1$, and $\varepsilon>0$ sufficiently small.
\end{proof}

\end{document}